\newtheorem{theorem}{Theorem}[]
\newtheorem{proposition}{Proposition}[section]
\newtheorem{corollary}[proposition]{Corollary}
\newtheorem{lemma}[proposition]{Lemma}
\theoremstyle{definition}
\newtheorem*{acknowledgements}{Acknowledgements}
\newcommand{\R}{\mathbb{R}} 
\newcommand{\C}{\mathbb{C}} 
\newcommand{\N}{\mathbb{N}} 
\newcommand{\duality}[2]{\big\langle #1 | #2 \big\rangle} 
\DeclareMathOperator{\supp}{supp}
\author{Pedro Caro}
\author{Keith M. Rogers}
\title[Uniqueness for the Calder\'on problem]{Global uniqueness for the Calder\'on problem with Lipschitz conductivities}
\date{}
\keywords{Inverse boundary value problems; Calder\'on problem; uniqueness.}
\thanks{The first author was partially supported by BERC 2014-2017 and  BCAM
Severo Ochoa SEV-2013-0323. The second author was partially supported by MTM2013-41780-P and ICMAT Severo Ochoa SEV-2011-0087 / SEV-2015-0554. Both authors were partially supported by the ERC starting grant 277778.}
\address{BCAM - Basque Center for Applied Mathematics, 48009 Bilbao, Spain and Ikerbasque, Basque Foundation for Science, 48011 Bilbao, Spain}
\email{pcaro@bcamath.org}
\address{Instituto de Ciencias Matem\'aticas CSIC-UAM-UC3M-UCM, 28049 Madrid, Spain} 
\email{keith.rogers@icmat.es}
\begin{document}

\begin{abstract} We prove uniqueness for Calder\'on's problem with Lipschitz conductivities in higher dimensions. Combined with the recent work of Haberman, who treated the three and four dimensional cases, this confirms a conjecture of Uhlmann. Our proof builds on the work of Sylvester and Uhlmann, Brown, and  Haberman and Tataru who proved uniqueness for $C^1$-conductivities and Lipschitz conductivities sufficiently close to the identity. 
\end{abstract}

\maketitle


\section{Introduction}

We consider the conductivity equation $\nabla \cdot (\gamma \nabla u) = 0$ on a bounded domain~$\Omega$ in $\R^n$ with $n\ge 3$. For each voltage potential placed on the boundary $\partial\Omega$, we are given the induced perpendicular current flux.
 In other words, we are given the Dirichlet-to-Neumann ({\small DN}) map $ \Lambda_\gamma$ 
formally defined by $$\Lambda_\gamma\,:\,u|_{\partial\Omega}\mapsto
\gamma\partial_\nu u |_{\partial\Omega}.$$
Here, $u$ solves the conductivity equation in $\Omega$ and $\partial_\nu$ is the outward normal derivative on the boundary $\partial\Omega$.
Then the  goal is
to recover  $\gamma$ from this information. 

Uhlmann conjectured  that if the conductivities $\gamma$ are bounded below by a positive constant 
 and belong to  $\mathrm{Lip}(\overline{\Omega})$, meaning that there is a constant $c > 0$ such that
\begin{align*}
|\gamma(x)| &\leq c,\qquad \forall\ x \in \overline{\Omega},\\
|\gamma(x) - \gamma(y)| & \leq c |x - y|,\qquad \forall\ x,y \in \overline{\Omega},
\end{align*}
then uniqueness should be guaranteed (see for example~\cite[Open Problem 1]{U0}).
That is to say there should be only one conductivity in the class for each {\small DN} map.
With this level of regularity the {\small DN}  map $ \Lambda_\gamma : H^{1/2} (\partial \Omega) \to H^{-1/2} (\partial \Omega) $ can be  defined via duality;
\[ \duality{\Lambda_\gamma f}{g} = \int_\Omega \gamma \nabla u \cdot \nabla v \,  \]
for any $ f, g \in H^{1/2} (\partial \Omega) $, where $ u,v \in H^1(\Omega) $ satisfy $ v|_{\partial \Omega} = g $ and  
\begin{equation*}
\left\{
\begin{aligned}
\nabla \cdot (\gamma \nabla u) &= 0\quad \mathrm{in}\ \Omega, \\
u|_{\partial \Omega} &= f.
\end{aligned}
\right.
\end{equation*}
For  sufficiently smooth conductivities, boundaries and solutions, this definition  corresponds with the previous one by integration by parts.

In the following theorem we improve on the result of Haberman and Tataru~\cite{HT} who proved uniqueness for $C^1$-conductivities and Lipschitz conductivities sufficiently close to the identity. 

\begin{theorem} \label{th:main_theorem} \sl Let $n\ge 3$ and consider $\Omega \subset \R^n$  a bounded domain with Lipschitz boundary. Let $\gamma_1,\gamma_2\in\mathrm{Lip}(\overline{\Omega})$ with $\gamma_1,\gamma_2 \ge c_0>0 $. Then $$\Lambda_{\gamma_1} = \Lambda_{\gamma_2}\quad \Rightarrow\quad \gamma_1 = \gamma_2.$$
\end{theorem}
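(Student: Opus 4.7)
My plan is to follow the Sylvester--Uhlmann strategy as refined by Brown and by Haberman--Tataru. The first step is the Liouville substitution $v = \gamma^{1/2}u$, which transforms the conductivity equation $\nabla\cdot(\gamma\nabla u) = 0$ into a Schr\"odinger equation $(-\Delta + q)v = 0$ with $q = \gamma^{-1/2}\Delta\gamma^{1/2}$. For merely Lipschitz $\gamma$ the potential $q$ is only a first-order distribution, so throughout it must be interpreted weakly via one integration by parts, using only that $\nabla\gamma^{1/2}\in L^\infty(\Omega)$. Brown's boundary-determination argument, valid at this regularity, identifies $\gamma_1|_{\partial\Omega}$ with $\gamma_2|_{\partial\Omega}$ from the \textsc{dn} map, so it suffices to show that $\Lambda_{\gamma_1}=\Lambda_{\gamma_2}$ implies $q_1=q_2$ in $\Omega$.

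To extract $q_1-q_2$, I would use complex geometric optics (CGO) solutions
$$v_\zeta(x) = e^{x\cdot\zeta}\bigl(1 + \psi_\zeta(x)\bigr), \qquad \zeta\in\C^n,\ \zeta\cdot\zeta = 0,\ |\zeta|\to\infty,$$
with $\psi_\zeta$ obtained by a contraction argument in the Bourgain-type space $\dot X^{1/2}_\zeta$ adapted to the Faddeev symbol $p_\zeta(\xi)=-|\xi|^2-2i\zeta\cdot\xi$, as in Haberman--Tataru. The hypothesis $\Lambda_{\gamma_1}=\Lambda_{\gamma_2}$ together with an Alessandrini-type identity for the transformed equations then yields
$$\int_\Omega (q_1-q_2)\, v^{(1)}_{\zeta_1}\,v^{(2)}_{\zeta_2}\,dx = 0$$
for every admissible CGO pair, and letting $\zeta_1+\zeta_2=-i\xi$ with $|\zeta_j|\to\infty$ recovers $\widehat{q_1-q_2}(\xi)=0$, forcing $\gamma_1=\gamma_2$.

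The main obstacle is the solvability step: building $\psi_\zeta$ requires multiplier estimates of the form $\|qr\|_{\dot X^{-1/2}_\zeta}\le \epsilon(\zeta)\|r\|_{\dot X^{1/2}_\zeta}$ with $\epsilon(\zeta)\to 0$, and for Lipschitz $\gamma$ these fail deterministically in dimensions $n\ge 5$ by a scaling argument, which is precisely why Haberman--Tataru required $C^1$ data or smallness. Following Haberman's approach for $n=3,4$, I would decompose $\gamma^{1/2}$ into a part mollified at a scale depending on $|\zeta|$, treated by classical symbol calculus for the Faddeev resolvent, plus a rough tail controlled only on average as $\zeta$ ranges over a suitable sphere in the variety $\{\zeta\cdot\zeta=0\}$. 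The heart of the argument will then be a careful, dimension-dependent balancing of the mollification scale against averaged $L^p$ (or bilinear) bounds for the Faddeev resolvent, designed so that the averaged remainder estimates retain $o(1)$ decay in \emph{every} $n\ge 3$. Extracting a single good $\zeta$ from the average, and showing the resulting CGO pairs are rich enough to recover the Fourier transform of $q_1-q_2$, will close the proof.
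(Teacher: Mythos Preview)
Your outline correctly identifies the overall architecture (Liouville substitution, boundary determination, Alessandrini identity, CGO solutions in the Haberman--Tataru spaces $X^{1/2}_\zeta$, averaging in $\zeta$), but the construction of the CGO solutions is where your proposal and the paper diverge, and where your proposal has a real gap.

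You plan to build $\psi_\zeta$ by contraction, mollifying $\gamma^{1/2}$ at a $|\zeta|$-dependent scale and controlling the rough tail \emph{on average} over $\zeta$, then ``extracting a single good $\zeta$ from the average.'' This is essentially Haberman's strategy, and the reason it stops at $n\le 4$ is not an artefact: the averaged multiplier/resolvent bounds he uses are tied to restriction-type estimates whose exponents deteriorate with $n$. Your sentence ``a careful, dimension-dependent balancing \dots\ designed so that the averaged remainder estimates retain $o(1)$ decay in every $n\ge 3$'' is exactly the missing idea, not a plan for supplying it. Moreover, extracting a good $\zeta$ from an average is delicate here because the pair $(\zeta_1,\zeta_2)$ must satisfy the rigid constraint $\zeta_1+\zeta_2=-ik$ for each fixed frequency $k$; you cannot choose $\zeta_1$ and $\zeta_2$ independently from separate good sets.

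The paper avoids this entirely by \emph{not} using contraction. Instead it proves, for every admissible $\zeta$ with $|\zeta|$ large, an a priori estimate
\[
\|u\|_{X^{1/2}_\zeta}\lesssim \|(-\Delta+2\zeta\cdot\nabla+q)u\|_{X^{-1/2}_\zeta}
\]
for compactly supported $u$, and then obtains $w_\zeta$ by duality (Hahn--Banach/Riesz). The new ingredient that makes this a priori estimate hold without any smallness on $\nabla\log\gamma$ is a Carleman estimate with a \emph{convex} weight $\varphi(x)=\tau x_n+\tfrac{M}{2}x_n^2$: the quadratic part produces a positive commutator of size $M\tau^2$, and choosing $M$ large (depending on $\|\nabla\log\gamma\|_{L^\infty}$) absorbs the contribution of $q$. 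One then removes the quadratic weight and rotates back to the $X^{1/2}_\zeta$ scale. Thus CGO solutions exist for \emph{every} large $\zeta$, and the Haberman--Tataru averaging is invoked only at the very end, to show $\|q_j\|_{X^{-1/2}_{\zeta_j}}\to 0$ along a sequence and hence that the remainder terms in the Alessandrini identity vanish. The convexification idea is absent from your proposal and is precisely what replaces the mollification/balancing step you left unspecified.
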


Counterexamples for unique continuation problems (see for example~\cite{P63} or~\cite{W}) show a loss of rigidity of the solutions that  suggest that uniqueness might fail for conductivities in H\"older spaces. However 
 uniqueness was recently proved in~\cite{H} for conductivities in $W^{1,n}\cap L^\infty(\Omega)$, with $n=3$,~$4$, so the gradient of the conductivity need not be bounded. 
The two--dimensional problem seems to have different mathematical properties; see~\cite{AP} for the solution to  
the uniqueness problem and~\cite{ALP} for the limits of invisibility and visibility. Examples of invisibility can also be understood as counterexamples to uniqueness.

We now briefly describe the main steps in the proof of Theorem~\ref{th:main_theorem}, highlighting our own contribution at the end. Kohn and Vogelius~\cite{KV} proved that smooth conductivities and their derivatives can be recovered on the boundary from the {\small DN} map. This was extended by Alessandrini~\cite{Al90}, who uniquely determined Lipschitz conductivities on Lipschitz boundaries.  This allows us to extend the conductivities to the whole space, using the method of Whitney, in such a way that they are equal outside of $\Omega$. We then transform the conductivity equation to a  Schr\"odinger equation by writing $v = \gamma^{1/2} u$ and $q = \gamma^{-1/2} \Delta \gamma^{1/2}$, so that
\[ \nabla \cdot (\gamma \nabla u) = 0  \qquad \Leftrightarrow \qquad (-\Delta + q)v = 0.\quad \]
In fact, following Brown~\cite{B}, we interpret $q$ as a multiplication map defined via duality as
\begin{align*}
\big\langle q \phi, \psi \big\rangle &= - \int \nabla \gamma^{1/2} \cdot \nabla (\gamma^{-1/2} \phi \psi) \\ 
&= \frac{1}{4} \int |\nabla \log \gamma|^2 \phi \psi \, - \frac{1}{2} \int \nabla \log \gamma \cdot \nabla (\phi \psi) 
\end{align*}
for $\phi, \psi \in H^1_\mathrm{loc}(\R^n)$, or in the distributional sense, taking $\phi=1$. Using  again that $\Lambda_{\gamma_1}=\Lambda_{\gamma_2}$, an Alessandrini type identity can be deduced;
\begin{equation}\label{ale}
\big\langle (q_1-q_2)v_1,v_2\big\rangle=0,
\end{equation}
where $q_j$ is the multiplication map associated to $\gamma_j$, and $(-\Delta + q_j)v_j = 0$ in $\Omega$. 

Calder\'on's original idea~\cite{C} was to produce enough oscillatory solutions so that an identity of the type \eqref{ale} would  imply  $q_1=q_2$. 
This was performed, for smooth conductivities, by Sylvester and Uhlmann~\cite{SU0,SU,SU2} using their complex geometrical optics (CGO) 
solutions (see also~\cite{NSU, B,PPU, BT, GLU}). 
That is to say, solutions to $(-\Delta + q_j)v_j = 0$ of the form
\begin{equation}\label{CGOs}v_{\zeta_j} = e^{\zeta_j \cdot x} (1 + w_{\zeta_j})\end{equation}
with $\zeta_j \in \C^n$ and $\zeta_j \cdot \zeta_j = 0$. As $e^{\zeta_j \cdot x}$ is a solution to Laplace's equation, these should be considered to be perturbed solutions with~$w_{\zeta_j}$ small in some sense. Moreover, one can choose different pairs $\zeta_1,\zeta_2 \in \C^n$ so that $$e^{\zeta_1 \cdot x}e^{\zeta_2 \cdot x}=e^{-ik\cdot x}$$ for a fixed frequency $k\in\R^n$. Solutions can be generated so that $w_{\zeta_j}$ decays to zero as $|\zeta_j|\to \infty$, 
in a suitable sense, allowing us to conclude from \eqref{ale} that $q_1=q_2$ by Fourier inversion. A key idea in the work of Haberman and Tataru~\cite{HT} is that it is enough to show that this decay occurs after averaging in $\zeta_j$. 

We have made no attempt to be exhaustive in this very brief history  of the problem. In particular we have  completely neglected reconstruction, stability and numerical results, as well as closely related inverse problems. We recommend~\cite{GKLU} for a more comprehensive bibliography as well as a more gentle introduction to the subject.

It remains to prove the existence of  CGO solutions for the whole class of Lipschitz conductivities.
For this we ask only that they solve the equation in $\Omega$, and not in the whole space as in~\cite{HT}.
That is, substituting \eqref{CGOs} into the Schr\"odinger equation, it is enough to find solutions $w_\zeta$ such that
\begin{equation}
(-\Delta - 2 \zeta \cdot \nabla + q)w_\zeta = - q\qquad \text{in}\ \Omega.
\label{eq:CGOs}
\end{equation} 
In order to achieve this, we prove that the formal adjoint is injective via the a priori estimate\footnote{We  write $a\lesssim b$ whenever $a$ and $b$ are nonnegative quantities that satisfy $a \leq C b$ for a  constant $C > 0$ independent of $M$ and $\tau$. We also  write $a\sim b$ whenever $a\lesssim b$ and $b\lesssim a$. }
\begin{equation}
\|\psi\|_{X_\zeta^{1/2}} \lesssim \| (-\Delta + 2 \zeta \cdot \nabla + q) \psi \|_{X_\zeta^{-1/2}} 
\label{es:apriori}
\end{equation}
for  all $\psi$ in the Schwartz class $\mathcal{S}(\R^n)$ such that $\supp \psi \subset \Omega$. Here, the norms are defined by
\[\| \psi\|_{X^b_\zeta} = \Big( \int (|\zeta| + |p_\zeta (\xi)|)^{2b} |\widehat{\psi}(\xi)|^2 \, d\xi \Big)^{1/2},\]
where $\,\widehat{\psi}\,$ is the Fourier transform of $\psi$ and
$p_\zeta (\xi) = |\xi|^2 +2i \zeta \cdot \xi.$
These spaces, first considered by Haberman and Tataru,  are adapted to the structure of the equation in the spirit of Bourgain spaces; see for example~\cite{T}.
 
A different approach was employed in~\cite{HT}, and the estimate~\eqref{es:apriori} was never stated there,   however it follows from their Lemma 2.2, Corollary 2.1 and Lemma 2.3 for $\|\nabla \log \gamma\|_{L^\infty}$ sufficiently small. Recalling that
$$
(-\Delta + 2 \zeta \cdot \nabla + q)=e^{x\cdot\zeta}(-\Delta+q)e^{-x\cdot\zeta},
$$
in order to remove the smallness condition, we introduce convex Carleman weights in the spirit of~\cite{KSU}, \cite{DKSU}, or \cite{KU}. That is we conjugate with another exponential,
 but this time with quadratic phases multiplied by a large parameter $M>0$. This allows us to improve the control on the~$L^2$ part of  the ${X^{1/2}_\zeta}$-norm. 

With a view to reconstruction, our solutions can be plugged into the usual integral formula which, after averaging, should lead to a reconstruction formula as in~\cite{GZ}. This is in contrast with~\cite{H}, where a sequence of CGO solutions, with good decay properties,  is shown to exist, however it is not so clear which values of~$\zeta$ should be taken in order to obtain this good behaviour. 

In the following section we will prove the existence of the CGO solutions. In the third section we establish Theorem~\ref{th:main_theorem}. In the final section we will prove the local Carleman type estimate that we use to obtain the CGO solutions.

\begin{acknowledgements} The first author would like to thank Andoni Garc\'ia and Lassi P\"aiv\"arinta for helpful discussions. The authors also thank the anonymous referees for helpful remarks.
\end{acknowledgements}

\section{Existence of the CGO solutions}
We use the method of {\it a priori estimates}, which produces solutions to an equation $Lu = 0$ on a open set $\Omega$ given an a priori estimate for the adjoint of $L$. In the first part of this section, we derive such an a priori estimate. Then we will show how this yields the solutions.

We introduce some notations and an appropriate family of spaces. The Fourier multiplier $ m$ is defined, once and for all, by
\[ m(\xi) = \big(M^{-1} \big||\xi|^2 - \tau^2\big|^2 + M^{-1} \tau^2 |\xi_n|^2 + M\tau^2\big)^{1/2},\]
where $M, \tau >1$ and we write 
\[\widehat{ m(D)^s u}(\xi) =  m(\xi)^s \widehat{u}(\xi),\qquad s\in\R.\]
For  $u$ in the Schwartz class  $\mathcal{S}(\R^n)$,
the Fourier transform  is given by\[\widehat{u}(\xi) = \frac{1}{(2\pi)^{n/2}} \int_{\R^n} e^{-i\xi \cdot x} u(x) \, dx.\]
 For every $u \in \mathcal{S}(\R^n)$, we define the norm
\[\| u \|_{Y^s} = \|  m(D)^s u \|_{L^2}.\]
Later we will take $\sqrt{2} \tau=|\zeta|$ and see how this norm relates with the Haberman--Tataru norm as described in the introduction.

The key ingredient  is the following inequality  which will be proved in the final section.

\begin{theorem} \label{th:carlemanY1/2Y-1/2} \sl Let $R, M, \tau>1$ and set $\varphi (x) = \tau x_n + M x_n^2/2$.
 Then, there is an absolute constant $C$ such that, if $M>CR^2$, then
\[ \| u \|_{Y^{1/2}} \le CR \| e^\varphi (- \Delta ) (e^{-\varphi} u) \|_{Y^{-1/2}} \]
provided  $u \in \mathcal{S}(\R^n)$ with $\supp\, u \subset \{ |x_n| \leq R \}$ and $\tau > 8 M R$.
\end{theorem}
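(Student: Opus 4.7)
First I would write out the conjugated operator. Since $\varphi=\tau x_n+Mx_n^2/2$ has $\nabla\varphi=\alpha e_n$ with $\alpha:=\tau+Mx_n$ and $\Delta\varphi=M$,
\[ P_\varphi u := e^\varphi(-\Delta)(e^{-\varphi}u) = -\Delta u + 2\alpha\,\partial_n u - \alpha^2 u + Mu. \]
I would split $P_\varphi=S+A$ into its $L^2$-symmetric and antisymmetric parts $S=-\Delta-\alpha^2$ and $A=2\alpha\,\partial_n+M$, and compute the positive commutator $[S,A]=4M(\alpha^2-\partial_n^2)$. The hypothesis $\tau>8MR$ forces $\alpha\in[7\tau/8,9\tau/8]$ on the support, so this commutator is pointwise bounded below by a positive multiple of $M\tau^2$, plus the nonnegative operator $-4M\partial_n^2$.

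The standard $L^2$ identity $\|P_\varphi u\|^2 = \|Su\|^2+\|Au\|^2+\langle [S,A]u,u\rangle$ provides a model estimate, but I need the $Y^{1/2}$-$Y^{-1/2}$ version. To obtain it, I would set $w := m(D)^{1/2}u$, so the target becomes the $L^2$ bound $\|w\|\le CR\|\widetilde P w\|$ for the pseudodifferentially conjugated operator $\widetilde P := m(D)^{-1/2}P_\varphi m(D)^{-1/2}$. Since $m(D)^{-1/2}$ is self-adjoint, the decomposition $\widetilde P=\widetilde S+\widetilde A$ preserves symmetry and antisymmetry, and expansion yields
\[ \|\widetilde P w\|_{L^2}^2 = \|Su\|_{Y^{-1/2}}^2 + \|Au\|_{Y^{-1/2}}^2 + \bigl\langle(Sm(D)^{-1}A-Am(D)^{-1}S)u,u\bigr\rangle. \]
The principal part of the cross term is $\langle m(D)^{-1}[S,A]u,u\rangle$, providing the positive contribution $4M\|m(D)^{-1/2}\alpha u\|_{L^2}^2 + 4M\|m(D)^{-1/2}\partial_n u\|_{L^2}^2$. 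The weight decomposes as $m(\xi)^2 = M^{-1}(|\xi|^2-\tau^2)^2 + M^{-1}\tau^2\xi_n^2 + M\tau^2$; I would match the three pieces of $\|u\|_{Y^{1/2}}^2$ to the three right-hand terms, namely the $(|\xi|^2-\tau^2)^2$ piece to $\|Su\|_{Y^{-1/2}}^2$ (after replacing $\alpha^2$ by $\tau^2$ inside $S$), the $\tau^2\xi_n^2$ piece to $\|Au\|_{Y^{-1/2}}^2$ (likewise), and the $M\tau^2$ piece to the commutator contribution.

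The main obstacle is controlling two families of errors. First, replacing $\alpha^2$ by $\tau^2$ introduces the perturbation $\alpha^2-\tau^2 = 2\tau Mx_n + M^2x_n^2$, bounded on the support by $2\tau MR+M^2R^2$; this is much smaller than $\tau^2$ precisely thanks to $\tau>8MR$, so it can be absorbed. Second, and harder, are the pseudodifferential commutator errors $[S,m(D)^{-1}]$ and $[A,m(D)^{-1}]$ that arise because multiplication by $\alpha$ or $\alpha^2$ does not commute with the Fourier multiplier $m(D)$. Handling these requires a symbolic commutator estimate showing that $[m(D)^{-1},x_n]$ gains one order in the weight scale, and the hypothesis $M>CR^2$ provides the spare room needed to absorb these errors at the cost of an overall factor of $R$ on each commutator, which is ultimately what produces the $R^2$ on the right-hand side of the stated inequality.
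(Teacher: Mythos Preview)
Your plan and the paper share the same core ingredient—the convex-weight commutator $[S,A]=4M(\alpha^2-\partial_n^2)$—but the architecture is genuinely different. The paper proceeds in two separate steps: first it proves a clean $Y^1$--$L^2$ estimate (Proposition~\ref{prop:carlemanY1L2}) by pure integration by parts with \emph{no} pseudodifferential operators present; this is where the factor $R$ (equivalently $R^2$ after squaring) actually originates, from scaling $\|Au\|_{L^2}^2$ by $1/(MR^2)$ so that the $(\alpha^2-\tau^2)$ error is absorbed by $M\tau^2\|u\|_{L^2}^2$. Only afterwards does the paper shift to $Y^{1/2}$--$Y^{-1/2}$ by applying Proposition~\ref{prop:carlemanY1L2} to $\chi\,m(D)^{-1/2}u$, where the spatial cutoff $\chi$ is forced because $m(D)^{-1/2}u$ is no longer supported in $\{|x_n|\le R\}$. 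This produces two error families: the commutator $[P_\varphi,m(D)^{-1/2}]u$ (Lemma~\ref{lem:commutator}, bounded by $M^{-1/2}\|u\|_{Y^{1/2}}$) and the tail $(1-\chi)m(D)^{-1/2}u$ (Lemma~\ref{lem:pseudo-locality}, handled by a delicate pseudo-locality argument). Your one-step symmetric conjugation $\widetilde P=m(D)^{-1/2}P_\varphi m(D)^{-1/2}$ dispenses with the cutoff and hence with the pseudo-locality lemma, which is the most laborious part of the paper's proof—so if it works, it is a genuine simplification.

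That said, there are concrete gaps. First, your asserted principal contribution $4M\|m(D)^{-1/2}\alpha u\|_{L^2}^2$ is not what $\langle m(D)^{-1}[S,A]u,u\rangle$ gives: the latter yields $4M\langle m(D)^{-1}\alpha^2 u,u\rangle$, and since $m(D)^{-1}$ and multiplication by $\alpha$ do not commute, this differs from $\|m(D)^{-1/2}\alpha u\|^2$ by yet another commutator. Second, the error terms $\langle [S,m(D)^{-1}]Au,u\rangle$ and $\langle [A,m(D)^{-1}]Su,u\rangle$ carry an extra $A$ or $S$ factor compared to the paper's $[P_\varphi,m(D)^{-1/2}]u$; a symbol-level check shows, for instance, that the $(-\Delta)$ part of $S$ feeds a contribution of order $\int m^{-1}|\xi|^2|\xi_n|\,|\widehat u|^2$, which is \emph{not} perturbatively small on its own but only after splitting against both $\|Au\|_{Y^{-1/2}}^2$ and the commutator term via an AM--GM with parameter $M^{-1/2}$. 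This is doable, but it is the heart of the matter and you have not carried it out. Third, your attribution of the factor $R^2$ to ``an overall factor of $R$ on each commutator'' is not where the paper gets it, and in your scheme the replacement $\alpha^2\to\tau^2$ inside $\|Su\|_{Y^{-1/2}}$ produces an error $\|(\alpha^2-\tau^2)u\|_{Y^{-1/2}}$ that must be controlled in a weighted (nonlocal) norm, not merely pointwise; this step needs an explicit bound (e.g.\ via the paper's multiplier lemma for functions of $x_n$) before you can claim absorption.
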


From now on we suppose that $\overline{\Omega}\subset \{x\,:\, |x|<R\}$. By a Whitney extension, we will see in the next section that  we can take  $\gamma \in W^{1, \infty} (\R^n)$, the Sobolev space with first partial derivatives in $L^\infty(\R^n)$, in such a way that $\gamma(x)=1$ whenever $|x|>R$.

The first step in the proof of the a priori estimate is to perturb Theorem~\ref{th:carlemanY1/2Y-1/2}, replacing  $e^\varphi (- \Delta)(e^{-\varphi}u)$ with $e^\varphi (- \Delta + T^\ast q)(e^{-\varphi}u)$, where $T^\ast q$ is defined by
\[\big\langle T^\ast q u, v \big\rangle = \frac{1}{4} \int |\nabla \log T^\ast \gamma|^2 u v  - \frac{1}{2} \int \nabla \log T^\ast \gamma \cdot \nabla (u v) \,.\]
Here, $T^\ast \gamma(x) = \gamma(T x)$ with $T$ a rotation to be chosen later.  By the triangle inequality, if $M > C R^2$, then
\begin{equation}
\| u \|_{Y^{1/2}} \le CR \Big(\| e^\varphi (- \Delta + T^\ast q)(e^{-\varphi}u) \|_{Y^{-1/2}} + \| T^\ast q u \|_{Y^{-1/2}}\Big)
\label{es:+-qu}
\end{equation}
for all $u\in \mathcal{S}(\R^n)$ such that $\supp u \subset \{ |x_n| \leq R \}$ and all $\tau > 8MR$. We now show that the second term on the right-hand side is negligible. 

We compute $\| T^\ast q u \|_{Y^{-1/2}}$ by duality. For all $v \in \mathcal{S} (\R^n)$, we have
\begin{align*}
| \big\langle T^\ast q u, v \big\rangle | & \lesssim  \|\nabla \log \gamma\|^2_{L^\infty} \| u \|_{L^2} \| v \|_{L^2} \\
& \quad + \| \nabla \log \gamma \|_{L^\infty} \big( \| \nabla u \|_{L^2} \| v \|_{L^2} + \| u \|_{L^2} \| \nabla v \|_{L^2} \big).
\end{align*}
Now given that $\| v \|_{L^2} \leq M^{-1/4} \tau^{-1/2} \| v \|_{Y^{1/2}}$
and
\begin{align*}
\| \nabla v \|_{L^2} & \le \Big( \int_{|\xi| < 2\tau} |\xi|^2 |\widehat{v}(\xi)|^2 \, d\xi \Big)^{1/2} + \Big( \int_{|\xi| \geq 2\tau} |\xi|^2 |\widehat{v}(\xi)|^2 \, d\xi \Big)^{1/2} \\
& \lesssim  \tau \| v \|_{L^2} + \Big( \int_{|\xi| \geq 2\tau} \big||\xi|^2 - \tau^2\big| |\widehat{v}(\xi)|^2 \, d\xi \Big)^{1/2} \\
& \le \tau^{1/2} M^{-1/4} \| v \|_{Y^{1/2}} + M^{1/4} \| v \|_{Y^{1/2}},
\end{align*}
this implies
\begin{align}\nonumber
|\big\langle T^\ast qu, v \big\rangle| & \lesssim   A^2\big(M^{-1/2} \tau^{-1} + M^{-1/2} +  \tau^{-1/2}\big)\| u \|_{Y^{1/2}} \| v \|_{Y^{1/2}} \\
& \le  A^2M^{-1/2} \| u \|_{Y^{1/2}} \| v \|_{Y^{1/2}}.\label{try}
\end{align}
for $\tau > 8 MR$. Here, and throughout this section,  $A>1$ is a constant such that 
$$
\|\nabla \log \gamma\|_{L^\infty}=\|\gamma^{-1}\nabla\gamma\|_{L^\infty}<A.
$$
Note that, for Lipschitz conductivites that are uniformly bounded below by zero, this quantity is always bounded, but not necessarily small. 
 By duality, \eqref{try}  yields
\begin{equation}
\| T^\ast q u \|_{Y^{-1/2}} \le   CA^2M^{-1/2}\| u \|_{Y^{1/2}},
\label{es:normY-1/2_qu}
\end{equation}
where $C$ is an absolute constant, and so we can make this small by taking $M$ sufficiently large.  
Plugging \eqref{es:normY-1/2_qu} into \eqref{es:+-qu}, we obtain
\[\| u \|_{Y^{1/2}} \le CR\big( \| e^\varphi (- \Delta + T^\ast q)(e^{-\varphi}u) \|_{Y^{-1/2}} +  A^2M^{-1/2}\| u \|_{Y^{1/2}}\big).\]
Thus, for $M \ge 4 C^2 R^2A^4$, we can absorb the second term on the right-hand side by half of the left-hand side. We summarise what we have obtained in the  following lemma.
\begin{lemma} \label{lem:estimate_q} \sl  Let $R, M, \tau>1$ and set $\varphi (x) = \tau x_n + M x_n^2/2$.
 Then there is an absolute constant $C$ such that, if $M \ge CR^2A^4$, then
\begin{equation*}
\| u \|_{Y^{1/2}} \le CR \| e^\varphi (- \Delta + T^\ast q)(e^{-\varphi}u) \|_{Y^{-1/2}}
\end{equation*}
provided  $u \in \mathcal{S}(\R^n)$ with $\supp\, u \subset \{ |x_n| \leq R \}$ and $\tau > 8 M R$.
\end{lemma}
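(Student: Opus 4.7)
The plan is to view the conclusion as a perturbation of Theorem~\ref{th:carlemanY1/2Y-1/2}, in which the role of $-\Delta$ is replaced by $-\Delta + T^\ast q$. First I would note that, since $T^\ast q$ acts through a bilinear form that depends on $u,v$ only through the product $uv$ and $\nabla(uv)$, the conjugation by $e^{\pm\varphi}$ cancels: $e^\varphi\, T^\ast q\, e^{-\varphi}u = T^\ast q u$. Thus, writing $e^\varphi(-\Delta)(e^{-\varphi}u) = e^\varphi(-\Delta + T^\ast q)(e^{-\varphi}u) - T^\ast q u$, Theorem~\ref{th:carlemanY1/2Y-1/2} together with the triangle inequality in $Y^{-1/2}$ yields the preliminary estimate \eqref{es:+-qu}. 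The task then reduces to showing $\| T^\ast q u \|_{Y^{-1/2}} \lesssim A^2 M^{-1/2}\|u\|_{Y^{1/2}}$, so that this extra term can be absorbed on the left once $M\geq C R^2 A^4$.

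To produce the dual estimate I would pair $T^\ast q u$ with a test function $v\in\mathcal{S}(\R^n)$ and apply Cauchy--Schwarz directly to the defining bilinear form, obtaining a bound of the shape $A^2\|u\|_{L^2}\|v\|_{L^2} + A(\|\nabla u\|_{L^2}\|v\|_{L^2} + \|u\|_{L^2}\|\nabla v\|_{L^2})$. The key is then to convert each of these into the $Y^{1/2}$-norm. The weight $m$ has been engineered precisely for this: the term $M\tau^2$ in its definition gives $\|v\|_{L^2}\leq M^{-1/4}\tau^{-1/2}\|v\|_{Y^{1/2}}$, and the term $M^{-1}||\xi|^2-\tau^2|^2$ controls high frequencies. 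Concretely, I would split $\|\nabla v\|_{L^2}$ at $|\xi|=2\tau$: on $\{|\xi|<2\tau\}$ one uses $|\xi|\lesssim\tau$ together with the previous $L^2$ bound to get $\tau^{1/2}M^{-1/4}\|v\|_{Y^{1/2}}$; on $\{|\xi|\geq 2\tau\}$ one exploits $|\xi|^2 \leq \tfrac{4}{3} \bigl||\xi|^2-\tau^2\bigr| \lesssim M^{1/2} m(\xi)$, yielding the bound $M^{1/4}\|v\|_{Y^{1/2}}$.

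Feeding these bounds back in, every prefactor collapses to a term of the form $A^2(M^{-1/2}\tau^{-1} + M^{-1/2} + \tau^{-1/2})$. Under the standing hypothesis $\tau>8MR$ with $R,M\geq 1$, the worst term $\tau^{-1/2}$ is already dominated by $M^{-1/2}$, so the full bound reduces to $A^2 M^{-1/2}\|u\|_{Y^{1/2}}\|v\|_{Y^{1/2}}$. By duality this proves $\| T^\ast q u \|_{Y^{-1/2}}\lesssim A^2M^{-1/2}\|u\|_{Y^{1/2}}$, and substituting into \eqref{es:+-qu} the $R\cdot A^2 M^{-1/2}$ error term is absorbed into the left-hand side as soon as $M\geq C R^2 A^4$. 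The main difficulty is the bookkeeping at the frequency-splitting step: the argument only closes because the $M$- and $\tau$-dependent weights in $m$ leave the smallness factor $M^{-1/2}$ intact on both the $L^2$ and the gradient sides, which is what ultimately overcomes the large constant $R$ inherited from the Carleman inequality.
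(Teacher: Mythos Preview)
Your proposal is correct and follows essentially the same route as the paper: triangle inequality against Theorem~\ref{th:carlemanY1/2Y-1/2}, duality to estimate $\|T^\ast q u\|_{Y^{-1/2}}$, the frequency split at $|\xi|=2\tau$ giving $\|\nabla v\|_{L^2}\lesssim (\tau^{1/2}M^{-1/4}+M^{1/4})\|v\|_{Y^{1/2}}$, and absorption for $M\ge CR^2A^4$. You also make explicit the cancellation $e^\varphi\,T^\ast q\,e^{-\varphi}=T^\ast q$ (since the bilinear form depends only on the product $uv$), which the paper leaves implicit when writing \eqref{es:+-qu}.
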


At this point the quadratic part of $\varphi$ and the parameter $M$ have served their purpose and so we fix $M = C R^2A^4$ with the constant from Lemma \ref{lem:estimate_q}, and note that
\[e^\varphi (- \Delta + T^\ast q)(e^{-\varphi}u ) = e^{Mx_n^2/2} (-\Delta + 2 \tau \partial_{x_n} - \tau^2 + T^\ast q) (e^{-Mx_n^2/2} u). \]
Taking $u = e^{Mx_n^2/2} v $ with $v \in \mathcal{S}(\R^n)$ such that $\supp v \subset \{ |x_n| \leq R \}$, we know that
\begin{equation}\label{kl}\| e^{Mx_n^2/2} v \|_{Y^{1/2}} \le CR\| e^{Mx_n^2/2} (-\Delta + 2 \tau \partial_{x_n} - \tau^2 + T^\ast q) v \|_{Y^{-1/2}}.\end{equation}

We will now remove the remaining exponential factors in a crude fashion. By duality, the estimate
\begin{align*}
\| e^{Mx_n^2/2} (-\Delta + 2 \tau \partial_{x_n} &- \tau^2 + T^\ast q) v \|_{Y^{-1/2}}  \lesssim \| (-\Delta + 2 \tau \partial_{x_n} - \tau^2 + T^\ast q) v \|_{Y^{-1/2}}
\end{align*}
would hold if it were true that
\begin{equation}
\| e^{Mx_n^2/2} \chi w \|_{Y^{1/2}} \lesssim \| w \|_{Y^{1/2}}.
\label{es:exp+M}
\end{equation}
Here $\chi (x_n) = \chi_0(x_n/R)$ where $\chi_0  \in C^\infty_0(\R; [0, 1])$ is such that 
$\chi_0(t) = 1$ for $|t| \leq 2$ and $\chi_0(t) =0 $ for $ |t| > 4$.
On the other hand, the estimate
\[\| v \|_{Y^{1/2}} \lesssim \| e^{Mx_n^2/2} v \|_{Y^{1/2}} \]
would follow from
\begin{equation}
\| e^{-Mx_n^2/2} w \|_{Y^{1/2}} \lesssim \| w \|_{Y^{1/2}}.
\label{es:exp-M}
\end{equation}
Using the following lemma we see that \eqref{es:exp+M} and \eqref{es:exp-M} hold allowing us to remove the exponential factors in \eqref{kl}.
\begin{lemma} \sl Let $f \in \mathcal{S}(\R)$ be a function of the $x_n$ variable. Then
\[\| f u \|_{Y^{1/2}} \lesssim \| p\hat{f} \|_{L^1(\R)} \| u \|_{Y^{1/2}}\]
provided $u \in \mathcal{S}(\R^n)$ and $\tau > M>1$. Here $p(\sigma) = (M^{-1} |\sigma| + 1)^2$.
\end{lemma}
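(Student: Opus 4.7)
The plan is to reduce the estimate to a pointwise bound on the multiplier $m$ under translations in the $e_n$ direction. By Fourier inversion in one variable,
\[
f(x_n)=(2\pi)^{-1/2}\int \widehat{f}(\sigma)\,e^{i\sigma x_n}\,d\sigma,
\]
and since multiplication by $e^{i\sigma x_n}$ dualizes to translation by $\sigma e_n$ on the Fourier side,
\[
\widehat{fu}(\xi)=(2\pi)^{-1/2}\int \widehat{f}(\sigma)\,\widehat{u}(\xi-\sigma e_n)\,d\sigma.
\]
Applying Minkowski's integral inequality to the $L^2_\xi$-norm of $m(\xi)\widehat{fu}(\xi)$ and then changing variables $\eta=\xi-\sigma e_n$ reduces matters to showing the pointwise estimate
\[
m(\eta+\sigma e_n)\lesssim p(\sigma)\,m(\eta),\qquad \sigma\in\R,\ \eta\in\R^n.
\]
If I can prove this with an absolute implicit constant, then the lemma follows immediately by pulling $p(\sigma)$ out of the integral against $|\widehat{f}(\sigma)|$.

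The pointwise estimate is the main (and only nontrivial) step. Writing $\xi=\eta+\sigma e_n$, I have $\xi_n=\eta_n+\sigma$ and
\[
|\xi|^2-\tau^2=(|\eta|^2-\tau^2)+\sigma(2\eta_n+\sigma).
\]
Two applications of $(a+b)^2\le 2a^2+2b^2$ give
\[
m(\xi)^2\le 2M^{-1}\bigl||\eta|^2-\tau^2\bigr|^2+16M^{-1}\sigma^2\eta_n^2+4M^{-1}\sigma^4+2M^{-1}\tau^2\eta_n^2+2M^{-1}\tau^2\sigma^2+M\tau^2.
\]
The terms that involve only $\eta$ are $\le 5\,m(\eta)^2\le 5 p(\sigma)^2 m(\eta)^2$ since $p(\sigma)\ge 1$. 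For the three genuinely $\sigma$-dependent terms, I exploit the lower bounds $M^{-1}\tau^2\eta_n^2\le m(\eta)^2$ and $M\tau^2\le m(\eta)^2$, together with $p(\sigma)^2\ge p(\sigma)\ge M^{-2}\sigma^2$ and $p(\sigma)^2\ge M^{-4}\sigma^4$. The hypothesis $\tau>M>1$ enters precisely so that $M^2/\tau^2<1$, which is exactly what is needed to absorb the factors $\tau^{-2}$ and $M^{-2}\tau^{-2}$ that arise. Concretely,
\[
16M^{-1}\sigma^2\eta_n^2\le 16(\sigma^2/\tau^2)\,m(\eta)^2\le 16(\sigma/M)^2 m(\eta)^2\le 16\,p(\sigma)^2 m(\eta)^2,
\]
\[
2M^{-1}\tau^2\sigma^2\le 2(M^{-1}\sigma)^2\,M\tau^2\le 2\,p(\sigma)^2 m(\eta)^2,
\]
\[
4M^{-1}\sigma^4=4(M^2/\tau^2)(\sigma/M)^4 M\tau^2\le 4\,p(\sigma)^2 m(\eta)^2,
\]
which together give $m(\xi)^2\lesssim p(\sigma)^2 m(\eta)^2$, as required.

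Putting the pieces together,
\[
\|fu\|_{Y^{1/2}}\le (2\pi)^{-1/2}\int |\widehat{f}(\sigma)|\Bigl(\int m(\eta+\sigma e_n)^2|\widehat{u}(\eta)|^2\,d\eta\Bigr)^{1/2}d\sigma\lesssim \int |\widehat{f}(\sigma)|p(\sigma)\,d\sigma\cdot\|u\|_{Y^{1/2}},
\]
which is the claimed bound. The only genuine obstacle is the multiplier estimate above, whose book-keeping is slightly delicate because the three summands in $m^2$ have different homogeneities in $\tau$ and $M$, so one must carefully pair each error term with the appropriate lower bound and the appropriate power of $p(\sigma)$.
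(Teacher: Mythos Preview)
Your proof is correct. There is a harmless slip in the final display: since $\|fu\|_{Y^{1/2}}^2=\int m(\xi)\,|\widehat{fu}(\xi)|^2\,d\xi$, the Minkowski step gives $\bigl(\int m(\eta+\sigma e_n)\,|\widehat u(\eta)|^2\,d\eta\bigr)^{1/2}$ rather than with $m^2$; your pointwise bound $m(\eta+\sigma e_n)\lesssim p(\sigma)m(\eta)$ then yields a factor $p(\sigma)^{1/2}\le p(\sigma)$, so the stated conclusion holds (and in fact you get the slightly stronger $\|p^{1/2}\widehat f\|_{L^1}$).

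Your route differs from the paper's in two ways. First, the paper substitutes $v=m(D)^{1/2}u$ and then applies Cauchy--Schwarz together with Tonelli (a Schur-test type argument) to reduce to the same key claim $m(\xi',\xi_n)/m(\xi',\eta_n)\lesssim (M^{-1}|\xi_n-\eta_n|+1)^2$; you reach that claim more directly via Minkowski's integral inequality on the convolution representation of $\widehat{fu}$. Second, the paper proves the multiplier ratio bound by a case analysis according to whether $|\xi_n|$, $|\xi'|$, $|\eta_n|$ lie above or below $2\tau$, whereas you expand $m(\eta+\sigma e_n)^2$ algebraically and pair each of the six resulting terms with an appropriate lower bound for $m(\eta)^2$ and an appropriate power of $p(\sigma)$, invoking $\tau>M$ only through $M^2/\tau^2<1$. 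Your algebraic argument is shorter and avoids the case split; the paper's argument makes the geometry of the characteristic set $\{|\xi|=\tau,\ \xi_n=0\}$ more visible. Both lead to exactly the same pointwise inequality, which is the heart of the lemma.
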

\begin{proof}
Firstly note that it is enough to prove that
\[\|  m(D)^{1/2} (f  m(D)^{-1/2} v)  \|_{L^2} \lesssim \| p\hat{f} \|_{L^1(\R)} \| v \|_{L^2}\]
for all $v \in \mathcal{S}(\R^n)$. Furthermore, note that
\[\mathcal{F}'( m(D)^{-1/2} v) (\xi', x_n) = \mathcal{F}_n^{-1}( m^{-1/2} \widehat{v}) (\xi', x_n)\]
where $\mathcal{F}'$ denotes the Fourier transform in $x'=(x_1, \dots, x_{n - 1})$ with dual variable $\xi'=(\xi_1, \dots, \xi_{n - 1})$ and $\mathcal{F}_n^{- 1}$ denotes the inverse Fourier transform in $\xi_n$ with spacial variable~$x_n$.  
Writing the Fourier transform of a product  as a convolution, 
\begin{align*}
\mathcal{F}(f  m(D)^{-1/2} v)(\xi) &= \frac{1}{(2\pi)^{1/2}} \int_\R e^{-i\xi_n x_n} f(x_n) \mathcal{F}_n^{-1}( m^{-1/2} \widehat{v}) (\xi', x_n) \, dx_n\\
&= \frac{1}{(2\pi)^{1/2}} \int_\R \hat{f} (\xi_n - \eta_n)  m(\xi', \eta_n)^{-1/2} \widehat{v}(\xi', \eta_n) \, d\eta_n.
\end{align*}
Thus, by Plancherel's identity, it is enough to show that
\begin{align*}
\int  m(\xi) \Big| \int_\R \hat{f} (\xi_n - \eta_n)  m(\xi', \eta_n)^{-1/2} \widehat{v}(\xi', \eta_n) & \, d\eta_n \Big|^2 \, d\xi  \lesssim \| p \hat{f} \|^2_{L^1(\R)} \int |\widehat{v} (\xi)|^2 \,d\xi.
\end{align*}
By the Cauchy-Schwarz inequality and Tonelli's theorem,
\begin{align*}
\int  m&(\xi) \Big| \int_\R \hat{f} (\xi_n - \eta_n)  m(\xi', \eta_n)^{-1/2} \widehat{v}(\xi', \eta_n) \, d\eta_n \Big|^2 \, d\xi \\
&\leq \int_\R |\hat{f} (\eta_n)| \, d\eta_n \int  m(\xi) \int_\R \frac{|\hat{f} (\xi_n - \eta_n)|}{  m(\xi', \eta_n)} |\widehat{v}(\xi', \eta_n)|^2 \, d\eta_n \, d\xi \\
& \leq \int_\R |\hat{f} (\eta_n)| \, d\eta_n \sup_{(\xi',\eta_n) \in \R^n} \int_\R \frac{ m(\xi',\xi_n)}{ m(\xi', \eta_n)} |\hat{f} (\xi_n - \eta_n)| \, d\xi_n \, \| \widehat{v} \|^2_{L^2}.
\end{align*}
Thus, we would be done, if we could prove that
\begin{equation}
\frac{ m(\xi',\xi_n)}{ m(\xi', \eta_n)} \lesssim  (M^{-1}|\eta_n - \xi_n| + 1)^2.
\label{es:claim1}
\end{equation}
To see that \eqref{es:claim1} holds, we note that
\[\frac{ m(\xi',\xi_n)}{ m(\xi', \eta_n)} \sim \frac{||\xi'|^2 + |\xi_n|^2 - \tau^2| + \tau |\xi_n| + M \tau}{||\xi'|^2 + |\eta_n|^2 - \tau^2| + \tau |\eta_n| + M \tau}\]
and divide the study of the the quotient in different cases. 
\vspace{1em}

First we suppose that $|\xi_n| \geq 2 \tau$. If $|\xi'| \geq 2 \tau$, then
\[\frac{ m(\xi',\xi_n)}{ m(\xi', \eta_n)} \lesssim \frac{|\xi'|^2 + |\xi_n|^2 + M \tau}{|\xi'|^2 + |\eta_n|^2 + M \tau} \lesssim 1 + \frac{|\xi_n - \eta_n|^2}{M\tau}. \]
 If $|\xi'| < 2 \tau$ and $|\eta_n| \geq 2 \tau$, then the same inequality holds. 
 Finally if $|\xi'| < 2 \tau$ and $|\eta_n| < 2 \tau$, then
\begin{align*}
\frac{ m(\xi',\xi_n)}{ m(\xi', \eta_n)} &\lesssim \frac{|\xi'|^2 + |\xi_n|^2 + M \tau}{||\xi'|^2 + |\eta_n|^2 - \tau^2 | + |\eta_n|^2 + M \tau} \\
& \lesssim \frac{|\xi_n|^2 + M \tau}{|\eta_n|^2 + M \tau} \lesssim 1 + \frac{|\xi_n - \eta_n|^2}{M\tau}.
\end{align*}

Second we suppose that $|\xi_n| < 2 \tau$. If $|\xi'| \geq 2 \tau$, then
\[\frac{ m(\xi',\xi_n)}{ m(\xi', \eta_n)} \lesssim \frac{|\xi'|^2 + |\xi_n|^2 + \tau |\xi_n| + M \tau}{|\xi'|^2 + |\eta_n|^2 + \tau |\eta_n| + M \tau} \lesssim \frac{|\xi'|^2 + M \tau}{|\xi'|^2 + M \tau} \lesssim 1. \]
Finally, if $|\xi_n| < 2 \tau$ and $|\xi'| < 2 \tau$, then
\[\frac{ m(\xi',\xi_n)}{ m(\xi', \eta_n)} \lesssim \frac{\tau |(|\xi'|^2 + |\xi_n|^2)^{1/2} - \tau| + \tau |\xi_n| + M\tau}{\tau |(|\xi'|^2 + |\eta_n|^2)^{1/2} - \tau| + \tau |\eta_n| + M\tau}.\]
Since $|(|\xi'|^2 + |\xi_n|^2)^{1/2} - \tau| \leq |(|\xi'|^2 + |\eta_n|^2)^{1/2} - \tau| + |\xi_n - \eta_n|$, we have that
\[\frac{ m(\xi',\xi_n)}{ m(\xi', \eta_n)} \lesssim 1 + \frac{|\xi_n - \eta_n|}{M}.\]
Therefore, for $\tau > M>1$, we have completed the proof of \eqref{es:claim1}.
\end{proof}
Combining  the inequalities \eqref{kl}, \eqref{es:exp+M} and \eqref{es:exp-M}, we obtain the following lemma.
\begin{lemma} \label{lem:pre-a_priori} \sl There is an absolute constant $C$ such that, for $M = CR^2A^4$, 
\[\| u \|_{Y^{1/2}} \lesssim \| (-\Delta + 2 \tau \partial_{x_n} - \tau^2 + T^\ast q) u \|_{Y^{-1/2}} \]
provided  $u \in \mathcal{S}(\R^n)$ with $\supp\, u \subset \{ |x_n| \leq R \}$ and $\tau > 8 M R$.
 The implicit constant depends on $A$ and $R$.
\end{lemma}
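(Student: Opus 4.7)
The plan is to chain the three ingredients \eqref{kl}, \eqref{es:exp+M} and \eqref{es:exp-M} in a straightforward way, with only one subtlety: the right-hand side of \eqref{kl} involves $e^{Mx_n^2/2}$ multiplying a distribution in $Y^{-1/2}$, so the exponential must be removed by a duality argument with a cutoff inserted by hand.

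First I would fix $v\in\mathcal{S}(\R^n)$ with $\supp v\subset\{|x_n|\le R\}$ and set $L=-\Delta+2\tau\partial_{x_n}-\tau^2+T^\ast q$. To treat the left-hand side of \eqref{kl}, I apply \eqref{es:exp-M} with $w=e^{Mx_n^2/2}v$, which immediately gives
\[\|v\|_{Y^{1/2}}=\|e^{-Mx_n^2/2}(e^{Mx_n^2/2}v)\|_{Y^{1/2}}\lesssim\|e^{Mx_n^2/2}v\|_{Y^{1/2}}.\]
Combined with \eqref{kl}, this yields $\|v\|_{Y^{1/2}}\lesssim R\|e^{Mx_n^2/2}Lv\|_{Y^{-1/2}}$, so the remaining task is to show $\|e^{Mx_n^2/2}Lv\|_{Y^{-1/2}}\lesssim\|Lv\|_{Y^{-1/2}}$.

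For the latter I would proceed by duality. The key observation is that $Lv$ has compact support in $\{|x_n|\le R\}$: the constant coefficient part $(-\Delta+2\tau\partial_{x_n}-\tau^2)v$ is supported where $v$ is, while $T^\ast q v$ is supported in $\supp(\nabla\log T^\ast\gamma)\cap\supp v\subset\{|x_n|\le R\}$ since the extended conductivity satisfies $\gamma\equiv 1$ outside a compact set. Therefore, for any test function $w\in\mathcal{S}(\R^n)$, using that $\chi\equiv 1$ on $\{|x_n|\le R\}$,
\[\langle e^{Mx_n^2/2}Lv,w\rangle=\langle Lv,e^{Mx_n^2/2}w\rangle=\langle Lv,\chi\, e^{Mx_n^2/2}w\rangle,\]
which by the pairing between $Y^{-1/2}$ and $Y^{1/2}$ together with \eqref{es:exp+M} is bounded by $\|Lv\|_{Y^{-1/2}}\|\chi e^{Mx_n^2/2}w\|_{Y^{1/2}}\lesssim\|Lv\|_{Y^{-1/2}}\|w\|_{Y^{1/2}}$. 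Taking the supremum over $w$ with $\|w\|_{Y^{1/2}}\le 1$ delivers the required inequality.

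The only nontrivial point to be careful about is the insertion of the cutoff $\chi$ before invoking \eqref{es:exp+M}, which is legitimate precisely because $Lv$ (and not merely $v$) is compactly supported in $\{|x_n|\le R\}$; after that step, chaining the three estimates is purely mechanical and produces the stated bound with a constant depending on $A$ and $R$ through the choice $M=CR^2A^4$.
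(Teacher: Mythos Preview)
Your argument is correct and follows exactly the approach outlined in the paper immediately before the lemma: remove the exponential on the left via \eqref{es:exp-M}, apply \eqref{kl}, and remove the exponential on the right by the duality argument based on \eqref{es:exp+M} after inserting the cutoff $\chi$. Your explicit verification that $Lv$ is supported in $\{|x_n|\le R\}$, which justifies inserting $\chi$ in the duality pairing, makes precise a step the paper leaves implicit.
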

From this estimate, we derive now the a priori estimate that we will use in the second part of this section to construct the CGO solutions.

\begin{proposition} \sl Let $\gamma \in W^{1, \infty} (\R^n)$ satisfy $\gamma (x) = 1$ whenever $|x| > R$  and suppose that $\|\nabla \log \gamma\|_{L^\infty}<A$ for some $A>1$. Define
\[\big\langle q u, v \big\rangle = \frac{1}{4} \int |\nabla \log \gamma|^2 u v  - \frac{1}{2} \int \nabla \log \gamma \cdot \nabla (u v) \,.\]
Let $\zeta = \mathrm{Re}\, \zeta + i \mathrm{Im}\, \zeta \in \C^n$ be such that $|\mathrm{Re}\, \zeta| = |\mathrm{Im}\, \zeta| = \tau$ and $\mathrm{Re}\, \zeta \cdot \mathrm{Im}\, \zeta = 0$. 
Then, there is an absolute constant~$C$ such that, for $\tau > C R^3A^4$,
\[\| u \|_{X_\zeta^{1/2}} \lesssim \| (-\Delta + 2 \zeta \cdot \nabla + q) u \|_{X_\zeta^{-1/2}} \]
provided  $u \in \mathcal{S}(\R^n)$ with $\supp u \subset \{ |x| \leq R \}$. The implicit constant depends on $A$ and $R$.
\end{proposition}
\begin{proof}
Let $T$ be a rotation that satisfies $\mathrm{Re}\, \zeta  = \tau T e_n$ and consider a Schwartz function $v(x) = T^\ast w(x) = w(T x)$ with $\supp w \subset \{ |x| \leq R \}$. Then, by Lemma \ref{lem:pre-a_priori}, 
\[\| v \|_{Y^{1/2}} \lesssim \| (-\Delta + 2 \tau \partial_{x_n} - \tau^2 + T^\ast q) v \|_{Y^{-1/2}} \]
for $M = C R^2A^4$ and $\tau > 8MR$. Obviously,
\[(-\Delta + 2 \tau \partial_{x_n} - \tau^2 + T^\ast q) v = T^\ast [ (-\Delta + 2 \mathrm{Re}\, \zeta \cdot \nabla - |\mathrm{Re}\, \zeta|^2 + q) w ], \]
which implies
\[\| T^\ast w \|_{Y^{1/2}} \lesssim \| T^\ast [ (-\Delta + 2 \mathrm{Re}\, \zeta \cdot \nabla - |\mathrm{Re}\, \zeta|^2 + q) w ] \|_{Y^{-1/2}}. \]
Consider now Schwartz $w(x) =  e^{-i \mathrm{Im}\, \zeta \cdot x} u(x)$ with $\supp u \subset \{ |x| \leq R \}$, we see that
\[T^\ast [ (-\Delta + 2 \mathrm{Re}\, \zeta \cdot \nabla - |\mathrm{Re}\, \zeta|^2 + q) w ] = T^\ast [ e^{-i \mathrm{Im}\, \zeta \cdot x} (-\Delta + 2 \zeta \cdot \nabla + q) u ]\]
and
\[\| T^\ast ( e^{-i \mathrm{Im}\, \zeta \cdot x} u) \|_{Y^{1/2}} \lesssim \| T^\ast [ e^{-i \mathrm{Im}\, \zeta \cdot x} (-\Delta + 2 \zeta \cdot \nabla + q) u ] \|_{Y^{-1/2}}. \]
On the other hand,
\[\mathcal{F} T^\ast ( e^{-i \mathrm{Im}\, \zeta \cdot x} f) (\xi) = \mathcal{F} ( e^{-i \mathrm{Im}\, \zeta \cdot x} f) (T \xi) = \widehat{f} (T \xi + \mathrm{Im}\, \zeta) \]
and
\begin{align*}
\int  m(\xi)^{2b} |\widehat{f} (T \xi &+ \mathrm{Im}\, \zeta)|^2 \,d\xi  \sim \int (||\xi|^2 - 2 \mathrm{Im}\, \zeta \cdot \xi|^2 + |\mathrm{Re}\, \zeta \cdot \xi|^2 + |\zeta|^2)^b |\widehat{f} (\xi)|^2 \, d\xi
\end{align*}
for $f \in X_\zeta^b$, and so we are done.
\end{proof}
Now that we have the a priori estimate, we start the construction of the CGO solutions. Essentially, the estimate tells us that $-\Delta +2 \zeta \cdot \nabla + q$ is injective which provides surjectivity of the formal adjoint and thus a solution $w_\zeta$ to 
$$
(-\Delta - 2 \zeta \cdot \nabla + q) w_\zeta=-q.
$$
However our estimate holds only for compactly supported functions, and so we must take some care with the dual spaces. 
Recalling that $\Omega$ is a bounded domain such that $ \overline{\Omega} \subset \{ |x| < R \}$, we define the space
\[X^{-1/2}_{\zeta,c} (\Omega) = \{ u \in X^{-1/2}_\zeta  : \supp u \subset \overline{\Omega} \}\]
with the norm of $X^{-1/2}_\zeta$. The dual of this  can be characterised by\footnote{This works in the same way as for Sobolev spaces; see for example~\cite{JK}.}
\[X^{1/2}_\zeta (\Omega) = \{ u|_\Omega : u \in X^{1/2}_\zeta \} \]
with the norm
\[\| u \|_{X^{1/2}_\zeta (\Omega)} = \inf \{ \| v \|_{X^b_\zeta} : u = v|_\Omega \}.\]
The existence of $w_\zeta$ such that
\[\big\langle (-\Delta - 2 \zeta \cdot \nabla + q) w_\zeta, u \big\rangle = - \big\langle q , u \big\rangle \]
for all $u \in \mathcal{S}(\R^n)$ such that $\supp u \subset \Omega$,  is equivalent to finding $w_\zeta$ so that
\[\big\langle w_\zeta, (-\Delta + 2 \zeta \cdot \nabla + q) u \big\rangle = - \big\langle q , u \big\rangle. \]
For this we prove that there exists a bounded functional $L$ defined on $X^{-1/2}_{\zeta, c} (\Omega)$ such that
$$L [(-\Delta + 2 \zeta \cdot \nabla + q) u] = - \big\langle q , u \big\rangle $$
for all $u \in \mathcal{S}(\R^n)$ such that $\supp u \subset \Omega$, then existence follows by the previously mentioned Riesz representation theorem. 

Indeed, define the linear space
$$\mathcal{L} = \Big\{ (-\Delta + 2 \zeta \cdot \nabla + q) u : u \in \mathcal{S}(\R^n),\, \supp u \subset \Omega \Big\}$$
and the linear functional
\[Lv = - \big\langle q , u \big\rangle, \] 
for all $v \in \mathcal{L}$ such that $v = (-\Delta + 2 \zeta \cdot \nabla + q) u$. By the a priori estimate, this functional is well-defined and bounded;
\begin{equation}
|L v| \leq \| q \|_{X^{-1/2}_\zeta} \| u \|_{X^{1/2}_\zeta} \lesssim \| q \|_{X^{-1/2}_\zeta} \| v \|_{X^{-1/2}_\zeta}.
\label{es:BoundForL}
\end{equation}
Thus, the functional can be continuously extended to the closure of $\mathcal{L}$ in $X^{-1/2}_{\zeta, c} (\Omega)$. Moreover, it can be extended by zero in the orthogonal complement of $\mathcal{L}$. Still denoting these extensions by $L$, we have a bounded linear functional defined on the whole of $X^{-1/2}_{\zeta, c} (\Omega)$. Since $X^{1/2}_\zeta (\Omega)$ is its dual, 
there exists a unique $w_\zeta \in X^{1/2}_\zeta (\Omega)$ such that
\[\big\langle w_\zeta, v  \big\rangle = L v, \qquad \forall\ v \in X^{-1/2}_{\zeta, c} (\Omega), \]
and
\[\| w_\zeta \|_{X^{1/2}_\zeta (\Omega)} \sim \sup_{v \in X^{-1/2}_{\zeta, c} \setminus \{ 0\}}\frac{| Lv |}{\| v \|_{X^{-1/2}_{\zeta, c} (\Omega)}} \lesssim \| q  \|_{X^{-1/2}_\zeta}.\]
In particular,
\[\big\langle w_\zeta, (-\Delta + 2 \zeta \cdot \nabla + q) u  \big\rangle = - \big\langle q , u \big\rangle, \] 
for all $u \in \mathcal{S}(\R^n)$ such that $\supp u \subset \Omega$. Thus we have proven the existence of the   CGO solutions, which we summarise in the following proposition.

\begin{proposition} \label{prop:CGOs} \sl Let $\gamma \in W^{1, \infty} (\R^n)$ satisfy $\gamma (x) = 1$ whenever $|x| > R$  and suppose that $\|\nabla \log \gamma\|_{L^\infty}<A$ for some $A>1$. Define
\[\big\langle q u, v \big\rangle = \frac{1}{4} \int |\nabla \log \gamma|^2 u v  - \frac{1}{2} \int \nabla \log \gamma \cdot \nabla (u v) \,.\]
Then, there is an absolute constant $C$ such that, for every $$\zeta = \mathrm{Re}\, \zeta + i \mathrm{Im}\, \zeta \in \C^n$$ with $|\mathrm{Re}\, \zeta| = |\mathrm{Im}\, \zeta| = \tau$, $\mathrm{Re}\, \zeta \cdot \mathrm{Im}\, \zeta = 0$ and $\tau > C R^3A^4$, there exists $w_\zeta \in X^{1/2}_\zeta (\Omega)$ so that
\[\| w_\zeta \|_{X^{1/2}_\zeta (\Omega)} \lesssim \| q  \|_{X^{-1/2}_\zeta},\]
and such that
$v_\zeta = e^{\zeta \cdot x} (1 + w_\zeta)$ solves  $(-\Delta + q) v_\zeta = 0$ in $\Omega$.
\end{proposition}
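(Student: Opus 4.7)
The plan is to use the duality version of the method of a priori estimates: the estimate of the preceding proposition (valid for $\tau>CR^3A^4$) controls the formal adjoint $-\Delta+2\zeta\cdot\nabla+q$, and this is exactly what is needed to produce a weak solution of $(-\Delta-2\zeta\cdot\nabla+q)w_\zeta=-q$ in $\Omega$. Since $\zeta\cdot\zeta=0$, the ansatz $v_\zeta=e^{\zeta\cdot x}(1+w_\zeta)$ converts $(-\Delta+q)v_\zeta=0$ into this equation for $w_\zeta$; after integrating by parts to transfer the operator onto the test function, it suffices to find $w_\zeta\in X^{1/2}_\zeta(\Omega)$ such that
\[
\langle w_\zeta,(-\Delta+2\zeta\cdot\nabla+q)u\rangle=-\langle q,u\rangle
\]
for every $u\in\mathcal{S}(\R^n)$ with $\supp u\subset\Omega$.

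To construct $w_\zeta$ I would first introduce the subspace
\[
\mathcal{L}=\{(-\Delta+2\zeta\cdot\nabla+q)u : u\in\mathcal{S}(\R^n),\ \supp u\subset\Omega\}\subset X^{-1/2}_{\zeta,c}(\Omega),
\]
and define $L\colon\mathcal{L}\to\C$ by $L(v)=-\langle q,u\rangle$ whenever $v=(-\Delta+2\zeta\cdot\nabla+q)u$. The a priori estimate forces the map $u\mapsto(-\Delta+2\zeta\cdot\nabla+q)u$ to be injective on our test class, so $L$ is well defined, and it also gives the bound
\[
|L(v)|\le\|q\|_{X^{-1/2}_\zeta}\|u\|_{X^{1/2}_\zeta}\lesssim\|q\|_{X^{-1/2}_\zeta}\|v\|_{X^{-1/2}_\zeta}.
\]
Since $X^{-1/2}_{\zeta,c}(\Omega)$ is a Hilbert space, I would extend $L$ by continuity to the closure of $\mathcal{L}$ and by zero on the orthogonal complement, obtaining a bounded functional on the whole space with norm $\lesssim\|q\|_{X^{-1/2}_\zeta}$. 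Invoking the duality $X^{1/2}_\zeta(\Omega)\cong(X^{-1/2}_{\zeta,c}(\Omega))^{*}$ and the Riesz representation theorem then delivers a unique $w_\zeta\in X^{1/2}_\zeta(\Omega)$ representing $L$ with the required norm estimate, and by construction $v_\zeta=e^{\zeta\cdot x}(1+w_\zeta)$ solves $(-\Delta+q)v_\zeta=0$ in $\Omega$.

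The main obstacle I anticipate is not the core argument, which is a clean Hahn--Banach/Riesz manoeuvre once the a priori estimate is in hand, but rather the careful justification of the duality identification $X^{1/2}_\zeta(\Omega)\cong(X^{-1/2}_{\zeta,c}(\Omega))^{*}$. The norms involved are anisotropic in $\zeta$ and behave like inhomogeneous Sobolev norms adapted to the symbol $p_\zeta$, so the identification is not automatic; however, it should follow by mimicking the classical duality between $H^{s}(\Omega)$ and $H^{-s}_{c}(\overline\Omega)$ on a Lipschitz domain, as in \cite{JK}. Everything else is a direct consequence of the machinery already developed.
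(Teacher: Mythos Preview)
Your proposal is correct and follows essentially the same route as the paper: define the linear functional $L$ on the range $\mathcal{L}$ of the adjoint operator, bound it using the a priori estimate, extend it to all of $X^{-1/2}_{\zeta,c}(\Omega)$ by continuity and orthogonal projection, and then invoke the duality $X^{1/2}_\zeta(\Omega)\cong(X^{-1/2}_{\zeta,c}(\Omega))^{*}$ (handled, as you anticipate, by the Sobolev-space argument of \cite{JK}) to produce $w_\zeta$ with the desired bound.
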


\section{Global uniqueness : The proof of Theorem~\ref{th:main_theorem}}
Since the approach is now reasonably standard, we will be brief.  In particular we are able to simply lift and apply the important new averaged estimate due to Haberman and Tataru~\cite{HT}.

Let $\gamma_1$ and $\gamma_2$ be as in the statement of Theorem \ref{th:main_theorem} with $\gamma_j(x) \leq c_0^{-1}$ and $|\gamma_j(x) - \gamma_j(y)| \leq c_0^{-1} |x - y|$ for all $x, y \in \overline{\Omega}\subset \{ |x| < R \}$ and assume that the associated Dirichlet-to-Neumann maps are equal; $\Lambda_{\gamma_1}=\Lambda_{\gamma_2}$. By the boundary determination theorems due to Alessandrini~\cite{Al90} or Brown~\cite{B01}, we know that $\gamma_1|_{\partial \Omega} = \gamma_2|_{\partial \Omega}$. Thus, we can perform a Whitney type extension of $\gamma_1$ and $\gamma_2$ to $\R^n$ such that the extensions, which we continue to denote by  $\gamma_1$ and $\gamma_2$, are equal outside of $\Omega$, equal to one outside the ball of radius $R$, and satisfy
\begin{align*}
 c_0 \lesssim \gamma_j (x) &\lesssim c_0^{-1},\, \qquad \forall\ x \in \R^n,  \\
 |\gamma_j(x) - \gamma_j(y)| &\lesssim c_0^{-1} |x - y|,\, \qquad \forall\ x, y \in \R^n, 
\end{align*}
with $j = 1,2$ (see for example~\cite[Chapter VI]{St}). The implicit constants  only depend on the dimension $n$. Details can be found in~\cite{CaGR} regarding how to get these specific  properties. Such conductivities can be identified with elements of $W^{1,\infty} (\R^n)$ and so we are able to apply Proposition \ref{prop:CGOs} with $\|\nabla \log\gamma\|_{L^\infty}\le Cc_0^{-2}=A$. Considering the associated potentials, we have the Alessandrini type identity
\begin{equation}
\big\langle (q_1 - q_2) v_1 , v_2 \big\rangle = 0,
\label{id:AlessandriniFOR}
\end{equation}
for all $v_j \in H^1_\mathrm{loc}(\R^n)$ such that $(-\Delta + q_j) v_j = 0$ in $\Omega$. The identity  was proved in \cite[Theorem 7]{B} for global solutions  (see also~\cite[Lemma 2.1]{CaGR}) but the same argument gives the identity for our local solutions.

The next step is to plug the CGO solutions constructed in the previous section into \eqref{id:AlessandriniFOR}.  Let $ P $ be a two-dimensional linear subspace orthogonal to $ k \in \R^n$ and let $ \eta \in S:= P \cap \{ x \in \R^n : |x| = 1 \} $. Let $ \theta $ be the unique vector making $ \{ \eta, \theta \} $ a positively oriented orthonormal basis of $ P $, and define
\begin{align*}
\zeta_1 = \tau \eta + i\Big(-\frac{k}{2} + \Big( \tau^2 - \frac{|k|^2}{4} \Big)^{1/2} \theta \Big) \\
\zeta_2 = -\tau \eta + i\Big(-\frac{k}{2} - \Big( \tau^2 - \frac{|k|^2}{4} \Big)^{1/2} \theta \Big)
\end{align*}
for $\tau > |k| $. Clearly $\zeta_1$ and $\zeta_2$ satisfy the conditions of Proposition~\ref{prop:CGOs}. Thus, there exist solutions to $(-\Delta + q_j) v_{\zeta_j} = 0 $ in $\Omega$ that take the form
$v_{\zeta_j} = e^{\zeta_j \cdot x} (1 + w_{\zeta_j})$
with $w_{\zeta_j} \in X^{1/2}_{\zeta_j} (\Omega) $ and
\begin{equation}
\| w_{\zeta_j} \|_{X^{1/2}_{\zeta_j} (\Omega)} \lesssim \| q_j \|_{X^{-1/2}_{\zeta_j}}
\label{es:boundforw_zetaj}
\end{equation}
for $j=1,2$ and $\tau > \max\{ C R^3 / c_0^8, |k|\}$. 

Let $w_j$ denote any extension of  $w_{\zeta_j} \in X^{1/2}_{\zeta_j} (\Omega)$ to $X^{1/2}_{\zeta_j}$,  and write
\begin{equation*}
v_j = e^{\zeta_j \cdot x} (1 + w_j).
\label{id:vj}
\end{equation*}
Then, to see that $w_j$ belongs to $H^1(\R^n)$ we note 
\begin{align*}
\int_{|\xi| < 4 \tau } (1 + |\xi|^2) |\widehat{w_j}(\xi)|^2 \, d\xi &\lesssim \tau \int_{|\xi| < 4 \tau } (|\zeta| + |p_{\zeta_j} (\xi)|) |\widehat{w_j}(\xi)|^2 \, d\xi \\
\int_{|\xi| \geq 4 \tau } (1 + |\xi|^2) |\widehat{w_j}(\xi)|^2 \, d\xi &\lesssim \int_{|\xi| \geq 4 \tau } (|\zeta| + |p_{\zeta_j} (\xi)|) |\widehat{w_j}(\xi)|^2 \, d\xi,
\end{align*}
so that $v_1$ and $v_2$ belong to $H^1_\mathrm{loc}(\R^n)$.
Thus, $v_1$ and $v_2$ can be plugged into \eqref{id:AlessandriniFOR} and we obtain
\begin{equation}
\begin{aligned}
\big\langle q_1 - q_2, e^{-ik\cdot x} \big\rangle &= -\big\langle q_1 - q_2, e^{-ik\cdot x}w_2 \big\rangle - \big\langle q_1 - q_2, e^{-ik\cdot x}w_1 \big\rangle \\
& \quad - \big\langle (q_1 - q_2)w_1, e^{-ik\cdot x}w_2 \big\rangle.
\end{aligned}
\label{id:FourierTrans}
\end{equation}
First we note that
\begin{align*}
| \big\langle (q_1 - q_2)w_1, e^{-ik\cdot x}w_2 \big\rangle | & \lesssim  (c_0^{-4} + c_0^{-2}|k|) \| w_1 \|_{L^2} \| w_2 \|_{L^2} \\
& \quad + c_0^{-2} ( \| \nabla w_1 \|_{L^2} \| w_2 \|_{L^2} + \| w_1 \|_{L^2} \| \nabla w_2 \|_{L^2}).
\end{align*}
For the terms on the  right-hand side of the previous inequality we note
\[\| w_j \|_{L^2} \lesssim \tau^{-1/2} \| w_j \|_{X^{1/2}_{\zeta_j}}\]
and
\begin{align*}
\| \nabla w_j \|_{L^2} & \leq \Big( \int_{|\xi| < 4\tau} |\xi|^2 |\widehat{w_j}(\xi)|^2 \, d\xi \Big)^{1/2} + \Big( \int_{|\xi| \geq 4\tau} |\xi|^2 |\widehat{w_j}(\xi)|^2 \, d\xi \Big)^{1/2} \\
& \lesssim \tau \| w_j \|_{L^2} + \Big( \int_{|\xi| \geq 4\tau} |p_{\zeta_j}(\xi)| |\widehat{w_j}(\xi)|^2 \, d\xi \Big)^{1/2} \\
& \leq \tau^{1/2} \| w_j \|_{X^{1/2}_{\zeta_j}} + \| w_j \|_{X^{1/2}_{\zeta_j}},
\end{align*}
so that
\begin{equation}
\big| \big\langle (q_1 - q_2)w_1, e^{-ik\cdot x}w_2 \big\rangle \big| \lesssim (c_0^{-2} + |k|)^2 \| w_1 \|_{X^{1/2}_{\zeta_1}} \| w_2 \|_{X^{1/2}_{\zeta_2}}.
\label{es:temrw_1w_2}
\end{equation}
On the other hand, using duality\footnote{See (3.17) in~\cite{CaGR} for more details.}
\begin{equation}
\big| \big\langle q_1 - q_2, e^{-ik\cdot x}w_j \big\rangle \big| \lesssim (1 + |k|) \| q_1 - q_2\|_{X^{-1/2}_{\zeta_j}} \| w_j \|_{X^{1/2}_{\zeta_j}}.
\label{es:temrw_j}
\end{equation}
From the identity \eqref{id:FourierTrans} and the inequalities \eqref{es:temrw_1w_2} and \eqref{es:temrw_j}, we get
\begin{align*}
|\big\langle q_1 - q_2, e^{-ik\cdot x} \big\rangle| &\lesssim (c_0^{-2} + |k|)^2 \Big( \| w_1 \|_{X^{1/2}_{\zeta_1}} \| w_2 \|_{X^{1/2}_{\zeta_2}} \\
& \quad + \| q_1 - q_2 \|_{X^{-1/2}_{\zeta_1}} \| w_1 \|_{X^{1/2}_{\zeta_1}}
+ \| q_1 - q_2 \|_{X^{-1/2}_{\zeta_2}} \| w_2 \|_{X^{1/2}_{\zeta_2}} \Big)
\end{align*}
for any extensions $w_1$ and $w_2$ of $w_{\zeta_1}$ and $w_{\zeta_2}$, which implies
\begin{align*}
\big|\big\langle q_1 - q_2, e^{-ik\cdot x} \big\rangle\big| &\lesssim (c_0^{-2} + |k|)^2 \Big( \| w_{\zeta_1} \|_{X^{1/2}_{\zeta_1} (\Omega)} \| w_{\zeta_2} \|_{X^{1/2}_{\zeta_2}(\Omega)} \\
& \quad + \| q_1 - q_2 \|_{X^{-1/2}_{\zeta_1}} \| w_{\zeta_1} \|_{X^{1/2}_{\zeta_1}(\Omega)}
+ \| q_1 - q_2 \|_{X^{-1/2}_{\zeta_2}} \| w_{\zeta_2} \|_{X^{1/2}_{\zeta_2}(\Omega)} \Big).
\end{align*}
Using \eqref{es:boundforw_zetaj}, we conclude that
\begin{align}\nonumber
|\big\langle q_1 - q_2, e^{-ik\cdot x} \big\rangle| &\lesssim (c_0^{-2} + |k|)^2 \Big( \| q_1 \|_{X^{-1/2}_{\zeta_1} (\Omega)} \| q_2 \|_{X^{-1/2}_{\zeta_2}(\Omega)} \\
& + \| q_1 - q_2 \|_{X^{-1/2}_{\zeta_1}} \| q_1 \|_{X^{-1/2}_{\zeta_1}(\Omega)}
+ \| q_1 - q_2 \|_{X^{-1/2}_{\zeta_2}} \| q_2 \|_{X^{-1/2}_{\zeta_2}(\Omega)} \Big).
\end{align}
\label{es:casiFIN}

Haberman and Tataru proved\footnote{See also  (3.16) from~\cite{CaGR}.} in~\cite{HT} that, for  $ \lambda \geq \max(|k|,1) $,
\begin{equation}
\begin{aligned}
\frac{1}{\lambda} \int_S \int_{\lambda}^{2\lambda} & \|q_j\|^2_{X^{-1/2}_{\zeta_j}} \, d \tau \, d l \lesssim \frac{1}{\lambda} c_0^{-8} + \frac{1}{\lambda^{1/2}} R^{n/2} c_0^{-4} \\
& + (1+ |k|^2) \sup_{|y| < 1} \| \nabla \log \gamma_j - \nabla \log \gamma_j (\centerdot - \lambda^{-1/4} y) \|^2_{L^2},
\end{aligned}
\label{es:acabando}
\end{equation}
where $dl$ is the length measure on $S$. Then, integrating ($\frac{1}{\lambda} \int_S \int_{\lambda}^{2\lambda} \, d \tau \, d l$) the inequality \eqref{es:casiFIN}, applying the Cauchy-Schwarz inequality, using \eqref{es:acabando} and letting $\lambda$ go to infinity, we obtain 
\[\big\langle q_1 - q_2, e^{-ik\cdot x} \big\rangle = 0,\qquad \forall\ k \in \R^n,\]
so that $q_1 = q_2$ by Fourier inversion. One can calculate that
\[ - \nabla \cdot \left( \gamma_1^{1/2} \gamma_2^{1/2} \nabla ( \log \gamma_1^{1/2} - \log \gamma_2^{1/2} ) \right) = \gamma_1^{1/2} \gamma_2^{1/2} (q_2 - q_1) \]
(see for example~\cite[pp. 486]{CaGR}), so that $ \log \gamma_1^{1/2} - \log \gamma_2^{1/2} \in H^1(\Omega) $  is a weak solution of
\[ - \nabla \cdot \left( \gamma_1^{1/2} \gamma_2^{1/2} \nabla u \right) = 0\quad \text{in}\ \Omega, \]
with $\log \gamma_1^{1/2}|_{\partial \Omega} = \log \gamma_2^{1/2}|_{\partial \Omega}$. By the uniqueness of this boundary value problem, we can conclude that $\gamma_1 = \gamma_2$ in $\Omega$.

\section{The proof of Theorem~\ref{th:carlemanY1/2Y-1/2}}\label{sec:a_priori}
 The proof consists of two steps. In the first, we use integration by parts to get a similar estimate, but with the spaces  $Y^1$ and $L^2$. In the second step, we use pseudo-locality of the Fourier multiplier operator and bound a commutator in order to lower the \lq regularity' of the estimate so that it involves the spaces  $Y^{1/2}$ and $Y^{-1/2}$.

The  first step is contained in the following proposition.
\begin{proposition} \label{prop:carlemanY1L2} \sl Let $R, M, \tau>1$ and set $\varphi (x) = \tau x_n + M x_n^2/2$. Then there is an absolute constant $C$ such that 
\[ \| u \|_{Y^1} \le CR \| e^\varphi (- \Delta ) (e^{-\varphi} u) \|_{L^2} \]
provided  $u \in \mathcal{S}(\R^n)$ with $\supp\, u \subset \{ |x_n| \leq R \}$ and $\tau > 2 M R$.
\end{proposition}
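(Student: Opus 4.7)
The plan is to use the standard Carleman scheme: conjugate $-\Delta$ with $e^{\pm\varphi}$, decompose into self-adjoint and skew-adjoint parts, and extract positivity from the commutator; the resulting identity already contains (modulo a bounded-support error) all three quantities that make up the $Y^1$ norm.

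First I would compute the conjugated operator. Expanding directly,
\[
P_\varphi := e^\varphi(-\Delta)(e^{-\varphi}\,\cdot\,) = -\Delta + 2(\tau+Mx_n)\partial_n -(\tau+Mx_n)^2 + M.
\]
I would then split $P_\varphi = S+A$ into
\[
S = -\Delta-(\tau+Mx_n)^2, \qquad A = 2(\tau+Mx_n)\partial_n + M,
\]
which are, respectively, symmetric and skew-symmetric on $\mathcal{S}(\R^n)$. Expanding $\|P_\varphi u\|_{L^2}^2 = \|Su\|^2+\|Au\|^2+\langle[S,A]u,u\rangle$, a direct commutator computation (with the $M$ term of $A$ commuting with everything) yields
\[
[S,A] = -4M\partial_n^2 + 4M(\tau+Mx_n)^2,
\]
so integration by parts gives the positive bound
\[
\|P_\varphi u\|_{L^2}^2 \ge \|Su\|_{L^2}^2 + \|Au\|_{L^2}^2 + 4M\|\partial_n u\|_{L^2}^2 + 4M\|(\tau+Mx_n)u\|_{L^2}^2.
\]
The support hypothesis $\supp u\subset\{|x_n|\le R\}$ combined with $\tau>2MR$ gives $\tau+Mx_n\in[\tau/2,3\tau/2]$ throughout.

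Next I would match this against the Plancherel identity
\[
\|u\|_{Y^1}^2 = M^{-1}\|(-\Delta-\tau^2)u\|_{L^2}^2 + M^{-1}\tau^2\|\partial_n u\|_{L^2}^2 + M\tau^2\|u\|_{L^2}^2.
\]
The $M\tau^2\|u\|^2$ piece is immediate from the $4M\|(\tau+Mx_n)u\|^2 \ge M\tau^2\|u\|^2$ lower bound. For the gradient piece, direct expansion of $|Au|^2$ with one integration by parts on the cross term produces the identity $\|Au\|^2 = 4\|(\tau+Mx_n)\partial_n u\|^2 - M^2\|u\|^2$; since $(\tau+Mx_n)^2\ge\tau^2/4$, this gives $\tau^2\|\partial_n u\|^2 \lesssim \|P_\varphi u\|^2 + M^2\|u\|^2$, and dividing by $M$ and invoking the $L^2$ bound controls $M^{-1}\tau^2\|\partial_n u\|^2$ by $\|P_\varphi u\|^2$. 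Finally, for the Laplacian piece, I would write $Su=(-\Delta-\tau^2)u - Vu$ with $V = 2M\tau x_n + M^2x_n^2$; the hypothesis $\tau>2MR$ forces $|V|\lesssim M\tau R$ on the support of $u$, so the triangle inequality yields $\|(-\Delta-\tau^2)u\|^2 \lesssim \|Su\|^2 + M^2\tau^2 R^2\|u\|^2$, and dividing by $M$ and absorbing $M\tau^2\|u\|^2$ produces the needed $R^2\|P_\varphi u\|^2$ bound. Taking square roots gives $\|u\|_{Y^1}\le CR\|P_\varphi u\|_{L^2}$.

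The main subtlety is the interplay between the parameters: the Laplacian term of the $Y^1$ norm is precisely the one that forces the factor $R$ (rather than $1$) on the right, since its control passes through $\|Vu\|_{L^2}$ with $\|V\|_{L^\infty(\supp u)}\sim M\tau R$. The positivity of the commutator requires no smallness condition but only that $\tau+Mx_n$ keep a definite sign on the support, which is exactly the role of the hypothesis $\tau>2MR$. Everything else reduces to careful bookkeeping of constants in the identity $\|P_\varphi u\|^2 = \|Su\|^2+\|Au\|^2+\langle[S,A]u,u\rangle$.
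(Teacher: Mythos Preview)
Your proof is correct and follows essentially the same route as the paper: the same symmetric/antisymmetric decomposition $P_\varphi = S + A$, the same commutator $[S,A] = -4M\partial_n^2 + 4M(\tau+Mx_n)^2$, and the same treatment of the three pieces of the $Y^1$ norm (using $|V|\lesssim M\tau R$ on $\supp u$ for the Laplacian term, which is what produces the factor $R$). The only cosmetic difference is that the paper throws away the $4M\|\partial_n u\|^2$ contribution from the commutator and organizes the bookkeeping by dividing $\|Su\|^2$ and $\|Au\|^2$ by suitable constants before summing, whereas you bound each $Y^1$ component separately; the substance is identical.
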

\begin{proof}
We start by writing 
\[e^\varphi (- \Delta ) (e^{-\varphi} u) = - \Delta u + \nabla \varphi \cdot \nabla u + \nabla \cdot (\nabla \varphi u) - |\nabla \varphi|^2 u.\]
Defining the formally self-adjoint $A$ and skew-adjoint $B$ by
\[A u = - \Delta u - |\nabla \varphi|^2 u, \qquad B u = \nabla \varphi \cdot \nabla u + \nabla \cdot (\nabla \varphi u), \]
 and integrating by parts, we see that
\begin{equation}
\| e^\varphi (- \Delta ) (e^{-\varphi} u) \|^2_{L^2} = \| Au \|^2_{L^2} + \| Bu \|^2_{L^2} + \int [A, B] u \overline{u} \,, \label{id:CARLEMANibp}
\end{equation}
where $[A, B] = AB - BA$ denotes the commutator.

In order to prove the inequality, we first compute the commutator and then estimate the corresponding terms. Using the definition of $\varphi$, we get that
\begin{align*}
Au(x) &= - \Delta u(x) - (\tau + Mx_n)^2 u(x), \\
Bu(x) &= 2(\tau + Mx_n) \partial_{x_n} u(x) + M u(x),
\end{align*}
which yields
\[[A, B]u(x) = - 4 M \partial^2_{x_n} u(x) + 4M (\tau + Mx_n)^2 u(x).\]
From this expression for the commutator together with a simple integration by parts, we see that\footnote{The choice of $\varphi$ was designed to make this commutator positive.}
\[\int [A, B] u \overline{u}  = 4 M \int |\partial_{x_n} u|^2  + 4 M \int |\nabla \varphi|^ 2 |u|^2 \,.\]
Neglecting the first term on the right-hand side of the previous identity and noting 
\begin{equation}
| \nabla \varphi (x) | \geq \tau - MR \label{es:gradPHI}
\end{equation}
whenever $|x_n| \le R$, we obtain
\[\int [A, B] u \overline{u}  \geq M \tau^2 \int |u|^2 \]
for $\tau > 2MR$. This inequality combined with \eqref{id:CARLEMANibp} implies
\begin{equation}
\| e^\varphi (- \Delta ) (e^{-\varphi} u) \|^2_{L^2} \geq \| Au \|^2_{L^2} + \| Bu \|^2_{L^2} + M \tau^2 \| u \|^2_{L^2}. \label{es:CARLEMANibp}
\end{equation}

Next, we estimate the first two terms on the right-hand side. The inequality $a^2 + b^ 2 \geq (a + b)^2 / 2$  allows us to write
\[\| Au \|^2_{L^2} \geq \frac{1}{2} \| - \Delta u - \tau^2 u \|^2_{L^2} - \| (\tau^2 - |\nabla \varphi|^2) u \|^2_{L^2}.\]
Since $\tau^2 - |\nabla \varphi (x)|^2 = -M x_n (2\tau + Mx_n)$, we get
\begin{equation*}
\| Au \|^2_{L^2} \geq \frac{1}{2} \| - \Delta u - \tau^2 u \|^2_{L^2} - \frac{25}{4}M^2R^2 \tau^2   \| u \|^ 2_{L^2}
\end{equation*}
for $\tau> 2MR$, which can be rewritten as
\begin{equation}
\frac{1}{25 M R^2}\| Au \|^2_{L^2} \geq \frac{1}{50 M R^2} \| - \Delta u - \tau^2 u \|^2_{L^2} - \frac{M \tau^2}{4} \| u \|^ 2_{L^2}. \label{es:Aterm}
\end{equation}
Again, using $a^2 + b^ 2 \geq (a + b)^2 / 2$ and the inequality \eqref{es:gradPHI}, we see that
\begin{align*}
\| Bu \|^2_{L^2} &\geq 2 \| |\nabla \varphi| \partial_{x_n} u \|^2_{L^2} - M^2 \| u \|^2_{L^2} \geq \frac{1}{2}\tau^2 \| \partial_{x_n} u \|^2_{L^2} - M^2 \| u \|^2_{L^2}
\end{align*}
for $\tau > 2MR$. We rewrite this last estimate as\footnote{Here we are preparing the negative term $M^2 \| u \|^2_{L^2}$ so that it can be absorbed by $M\tau^2 \|u \|^2_{L^2}$. It is to facilitate this calculation that we defined the spaces $Y^s$ as we did. Other choices are possible, however, it is only with this choice that we were able to bound the commutator.} 
\begin{equation}
\frac{1}{4M}\| Bu \|^2_{L^2} \geq \frac{\tau^2}{8M} \| \partial_{x_n} u \|^2_{L^2} - \frac{M}{4} \| u \|^2_{L^2}.
\label{es:Bterm}
\end{equation}
for $\tau > 2MR$.
Finally, plugging \eqref{es:Aterm} and \eqref{es:Bterm} into the inequality \eqref{es:CARLEMANibp}, we conclude
\begin{align*}
\| e^\varphi (- \Delta ) (e^{-\varphi} u) \|^2_{L^2} &\geq  \frac{1}{50 M R^2} \| - \Delta u - \tau^2 u \|^2_{L^2} - \frac{M \tau^2}{4} \| u \|^ 2_{L^2} \\
& \quad + \frac{\tau^2}{8M} \| \partial_{x_n} u \|^2_{L^2} - \frac{M}{4} \| u \|^2_{L^2} + M \tau^2 \| u \|_{L^2} \\
&\geq  \frac{1}{50 R^2} \bigg( \frac{1}{M} \| - \Delta u - \tau^2 u \|^2_{L^2} + \frac{\tau^2}{M} \| \partial_{x_n} u \|^2_{L^2}  + M \tau^2 \| u \|^2_{L^2} \bigg)\\ &=  \frac{1}{50 R^2} \| u \|^2_{Y^1}.
\end{align*}
for $\tau > 2MR$. This completes the proof.
\end{proof}

We proceed with the second step. That is, to shift the estimate to the spaces $Y^{1/2}$ and $Y^{-1/2}$. Let $\chi_0 \in C^\infty_0(\R; [0, 1])$ be such that $\chi_0(t) = 1$ for $|t| \leq 2$ and $\chi_0(t) =0 $ for $ |t| > 4$, and define $\chi(x_n) = \chi_0(x_n / R)$. Then
\[\| u \|_{Y^{1/2}} \leq \| \chi  m(D)^{-1/2} u \|_{Y^1} + \| (1 - \chi)  m(D)^{-1/2} u \|_{Y^1}, \]
so by Proposition \ref{prop:carlemanY1L2}, we have
\begin{equation}
\| u \|_{Y^{1/2}} \lesssim R \| e^\varphi (- \Delta)(e^{-\varphi} \chi  m(D)^{-1/2} u) \|_{L^2} + \| (1 - \chi)  m(D)^{-1/2} u \|_{Y^1}
\label{es:afterAPPLYINGY1L2}
\end{equation}
for $\tau > 8MR$.  It remains to bound the right-hand side of this inequality by a constant multiple of $R\| e^\varphi (- \Delta)(e^{-\varphi} u) \|_{Y^{-1/2}}$ plus negligible terms. 

We begin by noting that, by the Leibniz rule,
\begin{align*}
e^\varphi (- \Delta)(e^{-\varphi} \chi  m(D)^{-1/2} u) &= - \partial_{x_n}^2 \chi  m(D)^{-1/2} u - 2 \partial_{x_n} \chi e^\varphi \partial_{x_n} (e^{-\varphi}  m(D)^{-1/2} u)\\
& \quad + \chi e^\varphi (- \Delta)(e^{-\varphi}  m(D)^{-1/2} u).
\end{align*}
Differentiating again, we obtain
\begin{align*}
\| e^\varphi (-& \Delta)(e^{-\varphi} \chi  m(D)^{-1/2} u) \|_{L^2} \lesssim R^{-2} \| u \|_{Y^{-1/2}} \\
& \quad + \| \partial_{x_n} \chi \partial_{x_n} \varphi  m(D)^{-1/2} u \|_{L^2} + R^{-1} \| \partial_{x_n}  m(D)^{-1/2} u \|_{L^2} \\
& \quad + \| e^\varphi (- \Delta)(e^{-\varphi} u) \|_{Y^{-1/2}} + \big\| [ e^\varphi (- \Delta)(e^{-\varphi} \centerdot),  m(D)^{-1/2} ]u \big\|_{L^2},
\end{align*}
where the commutator is defined as usual.
The first three terms on the right-hand side can be estimated easily as follows.
Firstly \begin{align*}
\| u \|_{Y^{-1/2}} \leq M^{-1/4} \tau^{-1/2} \| u \|_{L^2} \leq M^{-1/2} \tau^{-1} \| u \|_{Y^{1/2}}.
\end{align*}
We will use this simple inequality frequently from now on without further comment. Indeed, it is used to see that the second term satisfies
\begin{align*}
\| \partial_{x_n} \chi \partial_{x_n} \varphi  m(D)^{-1/2} u \|_{L^2} &\leq  R^{-1} (\tau +4MR) \| u \|_{Y^{-1/2}} 
 \\
 &\lesssim  R^{-1} M^{-1/2} \| u \|_{Y^{1/2}},
\end{align*}
for $\tau > 8MR$. Finally,
\begin{align} 
\| \partial_{x_n}  m(D)^{-1/2} u \|_{L^2} &\leq \frac{M^{1/2}}{\tau} \bigg( \int \frac{M^{-1}\tau^2 |\xi_n|^2}{M^{-1/2}\tau |\xi_n|} |\widehat{u}(\xi)|^2 \, d\xi \bigg)^{1/2}\!\!\! \leq \frac{M^{1/2}}{\tau} \| u \|_{Y^{1/2}}.\label{fourr}
\end{align}
Altogether, for $\tau > 8MR$, we have
\begin{align}\nonumber
\| e^\varphi (- \Delta)(&e^{-\varphi} \chi  m(D)^{-1/2} u) \|_{L^2} \lesssim \| e^\varphi (- \Delta)(e^{-\varphi} u) \|_{Y^{-1/2}} \\
& \quad + \big\| [ e^\varphi (- \Delta)(e^{-\varphi} \centerdot),  m(D)^{-1/2} ]u \big\|_{L^2} +  R^{-1} M^{-1/2} \| u \|_{Y^{1/2}}. \label{es:gettingY-1/2}
\end{align}

We now turn our attention to the estimate for the commutator.
\begin{lemma} \label{lem:commutator} \sl 
Let $R, M, \tau>1$ and set $\varphi (x) = \tau x_n + M x_n^2/2$. Then there is an absolute constant $C$ such that
\[\big\| [ e^\varphi (- \Delta)(e^{-\varphi} \centerdot),  m(D)^{-1/2} ]u \big\|_{L^2} \le CM^{-1/2} \| u \|_{Y^{1/2}}\]
provided  $u \in \mathcal{S}(\R^n)$ with $\supp\, u \subset \{ |x_n| \leq R \}$ and $\tau > 8 M R$.
\end{lemma}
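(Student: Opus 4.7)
The plan is to compute the commutator $[L,m(D)^{-1/2}]$ explicitly (where $L:=e^\varphi(-\Delta)(e^{-\varphi}\cdot)$), reorganise it into three pieces, and then bound each piece using two sharp pointwise estimates on $\partial_{\xi_n}m^{-1/2}$ and $\partial_{\xi_n}^2 m^{-1/2}$ that are obtained by a case analysis in $\xi$-space.

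Since $m(D)^{-1/2}$ commutes with every constant-coefficient differential operator, expanding $L = -\Delta + 2(\tau+Mx_n)\partial_{x_n} + M - (\tau+Mx_n)^2$ reduces the commutator to a linear combination of $[x_n,m(D)^{-1/2}]$, $[x_n\partial_{x_n},m(D)^{-1/2}]$ and $[x_n^2,m(D)^{-1/2}]$ with coefficients $-2M\tau$, $2M$, and $-M^2$. Using the identity $\widehat{x_n u}(\xi)=i\partial_{\xi_n}\widehat u(\xi)$ one gets $[x_n,m(D)^{-1/2}]=i(\partial_{\xi_n}m^{-1/2})(D)$ and $[x_n^2,m(D)^{-1/2}]=-(\partial_{\xi_n}^2 m^{-1/2})(D)+2i(\partial_{\xi_n}m^{-1/2})(D)\circ x_n$. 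Combining the resulting $x_n$-multiplication piece with the $-2M\tau$ Fourier-multiplier piece (via a second application of the same commutator identity) rewrites the commutator in the convenient form
\[ [L,m(D)^{-1/2}]u = -2M\xi_n(\partial_{\xi_n}m^{-1/2})(D)u - 2iM(\partial_{\xi_n}m^{-1/2})(D)\big((\tau+Mx_n)u\big) + M^2(\partial_{\xi_n}^2 m^{-1/2})(D)u. \]

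The technical heart of the argument is the pair of pointwise bounds
\[ |\partial_{\xi_n}m^{-1/2}(\xi)|\le CM^{-1}m(\xi)^{-1/2}, \qquad |\partial_{\xi_n}^2 m^{-1/2}(\xi)|\le CM^{-2}m(\xi)^{-1/2}, \]
both of which follow from the explicit formula $\partial_{\xi_n}m^{-1/2}=-M^{-1}\xi_n(2|\xi|^2-\tau^2)/(2m^{5/2})$ together with the key inequality $|\xi_n||2|\xi|^2-\tau^2|\le Cm^2$. This last inequality is proved by splitting $\xi$-space into the regions $|\xi|\le\tau/2$, $|\xi|\ge 2\tau$, and the characteristic annulus $\tau^2/4<|\xi|^2<4\tau^2$ (with the annulus further split according to whether $|\xi_n|\le M$ or $|\xi_n|>M$). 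In each of these regions one of the three nonnegative summands in $m^2 = M^{-1}(|\xi|^2-\tau^2)^2+M^{-1}\tau^2\xi_n^2+M\tau^2$ dominates, and the hypothesis $\tau > 8MR\ge 8M$ closes the estimate.

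With these pointwise bounds, each of the three terms is estimated by the Plancherel-type inequality $\|a(D)v\|_{L^2}\le \sup(|a|m^{1/2})\,\|v\|_{Y^{-1/2}}$ combined with $\|v\|_{Y^{-1/2}}\le M^{-1/2}\tau^{-1}\|v\|_{Y^{1/2}}$. The first term uses the auxiliary inequality $\sup(\xi_n^2/m^2)\le M/\tau^2$ to give $\lesssim M^{1/2}/\tau\,\|u\|_{Y^{1/2}}\lesssim M^{-1/2}\|u\|_{Y^{1/2}}$. For the second term the support hypothesis $\supp u\subset\{|x_n|\le R\}$ and $\tau>8MR$ imply $|\tau+Mx_n|\le 9\tau/8$ on $\supp u$, whence $\|(\tau+Mx_n)u\|_{L^2}\lesssim\tau\|u\|_{L^2}$ and so $\|(\tau+Mx_n)u\|_{Y^{-1/2}}\lesssim M^{-1/2}\|u\|_{Y^{1/2}}$; combining with $\|(\partial_{\xi_n}m^{-1/2})(D)v\|_{L^2}\lesssim M^{-1}\|v\|_{Y^{-1/2}}$ yields $2M\cdot M^{-1}\cdot M^{-1/2}\|u\|_{Y^{1/2}}=CM^{-1/2}\|u\|_{Y^{1/2}}$. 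The third term is even smaller, bounded by $M^2\cdot M^{-2}\,\|u\|_{Y^{-1/2}}\lesssim M^{-3/2}R^{-1}\|u\|_{Y^{1/2}}$.

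The main obstacle is obtaining the sharper pointwise bound $|\partial_{\xi_n}m^{-1/2}|\lesssim M^{-1}m^{-1/2}$ rather than the naive $\lesssim M^{-1/2}m^{-1/2}$ one would expect by matching orders of symbols; the extra factor of $M^{-1/2}$ is precisely what is needed to counteract the $M^2$ prefactor coming from the quadratic part $Mx_n^2/2$ of the Carleman weight $\varphi$ via the commutator $[x_n^2,m(D)^{-1/2}]$.
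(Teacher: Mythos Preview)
Your proof is correct and is essentially the same as the paper's. Both compute the commutator on the Fourier side, reduce to the same pointwise symbol bounds $|\partial_{\xi_n}m^{-1/2}|\lesssim M^{-1}m^{-1/2}$ and $|\partial^2_{\xi_n}m^{-1/2}|\lesssim M^{-2}m^{-1/2}$ (the paper phrases these equivalently as $|\partial_{\xi_n}m^2|/m^2\lesssim M^{-1}$ and $|\partial^2_{\xi_n}m^2|/m^2\lesssim M^{-2}$, proved by the same case analysis), and then estimate term by term. The only organizational difference is that you group the $\tau$- and $Mx_n$-pieces into $(\partial_{\xi_n}m^{-1/2})(D)\big((\tau+Mx_n)u\big)$ and use $|\tau+Mx_n|\lesssim\tau$ on $\supp u$, whereas the paper keeps them separate and uses $\|\partial_{\xi_n}\widehat u\|_{L^2}\le R\|\widehat u\|_{L^2}$; these are equivalent uses of the support hypothesis.
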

\begin{proof}
By definition and a simple computation
\begin{align*}
[ e^\varphi (- \Delta)&(e^{-\varphi} \centerdot),  m(D)^{-1/2} ]u \\
&= e^\varphi (- \Delta)(e^{-\varphi}  m(D)^{-1/2} u) -  m(D)^{-1/2}\big[ e^\varphi (- \Delta)(e^{-\varphi} u)\big]\\
&= - |\nabla \varphi|^2  m(D)^{-1/2} u + 2 \nabla \varphi \cdot \nabla  m(D)^{-1/2} u\\
& \quad +  m(D)^{-1/2} (|\nabla \varphi|^2 u) - 2  m(D)^{-1/2} (\nabla \varphi \cdot \nabla u).
\end{align*}
Then we can separate the commutator into a pair of commutators;
\begin{align*}
\big\| [ e^\varphi (- \Delta)&(e^{-\varphi} \centerdot),  m(D)^{-1/2} ]u \big\|_{L^2} \\
& \lesssim \ \|  m(D)^{-1/2} (|\nabla \varphi|^2 u) - |\nabla \varphi|^2  m(D)^{-1/2} u \|_{L^2}\\
& \quad + \| \nabla \varphi \cdot \nabla  m(D)^{-1/2} u -  m(D)^{-1/2} (\nabla \varphi \cdot \nabla u) \|_{L^2}.
\end{align*}
To estimate them, we note that on the Fourier side
\begin{align*}
\mathcal{F} [  m(D)^{-1/2} , |\nabla \varphi|^2 ] u &=  m^{-1/2} (\tau + i M \partial_{\xi_n})^2 \widehat{u} - (\tau + i M \partial_{\xi_n})^2 (  m^{-1/2} \widehat{u})\\
&= -i2 \tau M \partial_{\xi_n}  m^{-1/2} \widehat{u} + M^2 \partial_{\xi_n}^2  m^{-1/2} \widehat{u}\\
& \quad + 2 M^2 \partial_{\xi_n}  m^{-1/2} \partial_{\xi_n} \widehat{u}
\end{align*}
and
\begin{align*}
\mathcal{F} [\nabla \varphi \cdot \nabla ,  m(D)^{-1/2}] u &= (\tau + i M \partial_{\xi_n}) (i \xi_n  m^{-1/2} \widehat{u}) -  m^{-1/2} (\tau + i M \partial_{\xi_n}) (i \xi_n \widehat{u})\\
 &= - M \partial_{\xi_n}  m^{-1/2} \xi_n \widehat{u}.
\end{align*}
From this we see that
\begin{align}\nonumber
\big\| [ e^\varphi (- \Delta)(e^{-\varphi} \centerdot),  m(D)^{-1/2} ]u \big\|_{L^2} 
& \lesssim  \tau M \| \partial_{\xi_n}  m^{-1/2} \widehat{u} \|_{L^2} + M^2 \| \partial_{\xi_n}^2  m^{-1/2} \widehat{u} \|_{L^2} \\
  +& M^2 \| \partial_{\xi_n}  m^{-1/2} \partial_{\xi_n} \widehat{u} \|_{L^2} + M \|\partial_{\xi_n}  m^{-1/2} \xi_n \widehat{u}\|_{L^2}.
\label{es:commutatorL2}
\end{align}
On the other hand, simple computations show that
\begin{align}
\partial_{\xi_n}  m^{-1/2} &= - \frac{1}{4}  m^{-1/2} \frac{\partial_{\xi_n}  m^2}{ m^ 2}, \label{id:firstDERIV} \\
\partial_{\xi_n}^2  m^{-1/2} &= \frac{5}{16}  m^{-1/2} \frac{(\partial_{\xi_n}  m^2)^2}{ m^4} - \frac{1}{4}  m^{-1/2} \frac{\partial_{\xi_n}^2  m^2}{ m^2} \label{id:secondDERIV}
\end{align}
with
\begin{align*}
\partial_{\xi_n}  m(\xi)^2 &= 4 M^{-1}(|\xi|^2 - \tau^2) \xi_n + 2 M^{-1} \tau^2 \xi_n,\\
\partial_{\xi_n}^2  m(\xi)^2 &= 8 M^{-1} \xi_n^2 + 4 M^{-1}(|\xi|^2 - \tau^2) + 2 M^{-1} \tau^2.
\end{align*}

Now, denoting the characteristic functions of $\{\xi : |\xi| \geq 2 \tau \}$ and $\{\xi : |\xi| < 2 \tau \}$ by $\mathbf{1}_{|\xi| \geq 2 \tau}$ and $\mathbf{1}_{|\xi| < 2 \tau}$ respectively, we get
\begin{align}\nonumber
\frac{|\partial_{\xi_n}  m(\xi)^2|}{ m(\xi)^2}& \lesssim \mathbf{1}_{|\xi| \geq 2 \tau} \frac{|\xi|^3}{|\xi|^4 + M^2 \tau^2} + \mathbf{1}_{|\xi| < 2 \tau} \frac{\tau^2 \big||\xi| - \tau\big| + \tau^2 |\xi_n|}{\tau^2 \big||\xi| - \tau\big|^2 + \tau^2 |\xi_n|^2 + M^2 \tau^2}\\\nonumber
& \leq  \frac{|\xi|^3}{|\xi|^4 + M^2 \tau^2} + \frac{|\xi_n|}{|\xi_n|^2 + M^2} + \frac{\big||\xi| - \tau\big|}{\big||\xi| - \tau\big|^2 + M^2}\\& \lesssim  \frac{1}{M^{1/2}\tau^{1/2}} + \frac{1}{M} \le \frac{1}{M},
\label{es:firstDERIV}
\end{align}
for $\tau > 8MR$. Following the same kind of computations, we have
\begin{align}\nonumber
\frac{|\partial_{\xi_n}^2  m(\xi)^2|}{ m(\xi)^2} & \lesssim \mathbf{1}_{|\xi| \geq 2 \tau} \frac{|\xi|^2}{|\xi|^4 + M^2 \tau^2} + \mathbf{1}_{|\xi| < 2 \tau} \frac{\tau \big||\xi| - \tau\big| + \tau^2}{\tau^2 \big||\xi| - \tau\big|^2 + \tau^2 |\xi_n|^2 + M^2 \tau^2}\\ \nonumber
& \leq  \frac{|\xi|^2}{|\xi|^4 + M^2 \tau^2} + \frac{1}{M^2} + \frac{1}{\tau} \frac{\big||\xi| - \tau\big|}{\big||\xi| - \tau\big|^2 + M^2}\\
& \lesssim  \frac{1}{M \tau} + \frac{1}{M^2}\lesssim \frac{1}{M^2}
\label{es:secondDERIV}
\end{align}
for $\tau > 8MR$. 

We can now estimate the right-hand side of \eqref{es:commutatorL2}. Plugging in \eqref{id:firstDERIV}, \eqref{id:secondDERIV}, \eqref{es:firstDERIV} and \eqref{es:secondDERIV} into \eqref{es:commutatorL2}, we see that
\begin{align*}
&\ \quad \big\| [ e^\varphi (- \Delta)(e^{-\varphi} \centerdot),  m(D)^{-1/2} ]u \big\|_{L^2} \\
& \lesssim  \tau \|  m^{-1/2} \widehat{u} \|_{L^2} + \|  m^{-1/2} \widehat{u} \|_{L^2} + M \|  m^{-1/2} \partial_{\xi_n} \widehat{u} \|_{L^2} + \| m^{-1/2} \xi_n \widehat{u}\|_{L^2} \\
&\lesssim  \tau \| u \|_{Y^{-1/2}} + M^{3/4} R \tau^{-1/2} \| u \|_{L^2} +  \frac{M^{1/2}}{\tau} \| u \|_{Y^{1/2}}\\
&\leq  M^{-1/2} \| u \|_{Y^{1/2}} + M^{1/2} R \tau^{-1} \| u \|_{Y^{1/2}} + \frac{M^{1/2}}{\tau} \| u \|_{Y^{1/2}},
\end{align*}
where in the second inequality we used \eqref{fourr} and $\| \partial_{\xi_n} \widehat{u} \|_{L^2} \le R \| \widehat{u} \|_{L^2}$.
Then, for $\tau > 8MR$, this yields the desired estimate.
\end{proof}

Combining the estimates \eqref{es:afterAPPLYINGY1L2} and \eqref{es:gettingY-1/2}  with Lemma \ref{lem:commutator}, we obtain
\begin{equation}
\begin{aligned}
\| u \|_{Y^{1/2}} &\lesssim  R \| e^\varphi (- \Delta)(e^{-\varphi} u) \|_{Y^{-1/2}} + R M^{-1/2} \| u \|_{Y^{1/2}} \\
& \quad + \| (1 - \chi)  m(D)^{-1/2} u \|_{Y^1}
\end{aligned} 	\label{es:afterCOMMUTATOR}
\end{equation}
for $\tau > 8MR$. The second term on the right-hand side of \eqref{es:afterCOMMUTATOR} can be absorbed into the left-hand side for sufficiently large $M$. In what remains of this section we will show that this is also true of the third term. If the cut-off $\chi$ were on the frequency side this part would be very simple, however we were obliged to work on the spatial side in order to use integration by parts arguments in the previous lemmas.

We begin by noting that by definition, 
\begin{align}\nonumber
M^{1/2}\| (1 - \chi)  m(D)^{-1/2} u \|_{Y^1} &\leq   \| (\Delta + \tau^2) [ (1 - \chi)  m(D)^{-1/2} u ] \|_{L^2} \\\nonumber
& \quad +  \tau \| \partial_{x_n} [ (1 - \chi)  m(D)^{-1/2} u] \|_{L^2} \\
& \quad + M\tau \| (1 - \chi)  m(D)^{-1/2} u \|_{L^2}.\label{es:plY1}
\end{align} 
The first term on the right-hand side of \eqref{es:plY1} can be bounded as
\begin{align*}
 \| (\Delta + \tau^2) [ (1 - &\chi)  m(D)^{-1/2} u ] \|_{L^2} \lesssim   \| \partial_{x_n}^2 \chi  m(D)^{-1/2} u \|_{L^2}\\& \quad +  \| \partial_{x_n} \chi \partial_{x_n}  m(D)^{-1/2} u \|_{L^2} + \| (1 - \chi)(\Delta + \tau^2)   m(D)^{-1/2} u \|_{L^2}\\
&\lesssim  M^{-1/2} \tau^{-1} R^{-2} \| u \|_{Y^{1/2}} + M^{1/2}\tau^{-1}R^{-1}  \| u \|_{Y^{1/2}} \\
& \quad +  \| (1 - \chi)(\Delta + \tau^2)   m(D)^{-1/2} u \|_{L^2},
\end{align*}
where in the second inequality we used \eqref{fourr}.
The second term on the right-hand side of \eqref{es:plY1} 
can be bounded as
\begin{align*}
 \tau  \| \partial_{x_n} [& (1 - \chi)  m(D)^{-1/2} u] \|_{L^2} \\
&\leq   \tau \| \partial_{x_n} \chi  m(D)^{-1/2} u \|_{L^2} +  \tau \| (1 - \chi) \partial_{x_n}  m(D)^{-1/2} u \|_{L^2} \\
&\lesssim  M^{-1/2} R^{-1} \| u \|_{Y^{1/2}} +  \tau \| (1 - \chi) \partial_{x_n}  m(D)^{-1/2} u \|_{L^2}
\end{align*}
Thus, \eqref{es:plY1} becomes
\begin{align}\nonumber\ \quad  M^{1/2}\| &(1 - \chi)  m(D)^{-1/2} u \|_{Y^1} \\\nonumber
&\lesssim  \| (1 - \chi) (\Delta + \tau^2)  m(D)^{-1/2} u \|_{L^2}+  \| (1 - \chi) \tau\partial_{x_n}  m(D)^{-1/2} u \|_{L^2} \\
 \label{es:plY1_2} &\quad +  M  \| (1 - \chi) \tau m(D)^{-1/2} u \|_{L^2} + M^{-1/2}R^{-1} \| u \|_{Y^{1/2}}
\end{align}
for $\tau > 8MR$. Again, the last term in \eqref{es:plY1_2} is  negligible, so it remains to prove that the other three are negligible as well. 

Due to the pseudo-locality of the operators and the fact that the supports of $1 - \chi$ and $u$ are separated, it is tempting to suppose that this should be straightforward. However we need to be careful to avoid growth in~$\tau$ coming from the volume of the manifold $ \{ |\xi'| = \tau, \xi_n = 0 \}$.

\begin{lemma} \label{lem:pseudo-locality} \sl Let $N \in \N$. Then there is a constant $C_N$, depending only on $N$, such that
\begin{gather*}
 \| (1 - \chi) (\Delta + \tau^2)  m(D)^{-1/2} u \|_{L^2} \le C_NR^{-N}M^{-N} \| u \|_{Y^{1/2}}, \\
 \| (1 - \chi) \tau \partial_{x_n}  m(D)^{-1/2} u \|_{L^2} \le C_NR^{-N}M^{-N} \| u \|_{Y^{1/2}}, \\
 \| (1 - \chi) \tau m(D)^{-1/2} u \|_{L^2} \le C_NR^{-N}M^{-N} \| u \|_{Y^{1/2}},
\end{gather*}
provided  $u \in \mathcal{S}(\R^n)$ with $\supp\, u \subset \{ |x_n| \leq R \}$ and $\tau > 8 M R$. 
\end{lemma}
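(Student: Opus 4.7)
The strategy is to exploit pseudo-locality: since $\supp(1-\chi)\subset\{|x_n|>2R\}$ while $\supp u\subset\{|x_n|\le R\}$, the two supports are separated by distance at least $R$, and repeated integration by parts in $\xi_n$ should convert this spatial separation into arbitrary polynomial decay. The three operators have multiplier symbols
\[
m_1(\xi)=(\tau^2-|\xi|^2)m(\xi)^{-1/2},\quad m_2(\xi)=i\tau\xi_n m(\xi)^{-1/2},\quad m_3(\xi)=\tau m(\xi)^{-1/2},
\]
and for each $j$ I would write $T_j u(x) = (2\pi)^{-n/2}\int e^{ix\cdot\xi}m_j(\xi)\hat u(\xi)\,d\xi$. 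Integrating by parts $N$ times in $\xi_n$ (all boundary terms vanishing since $u\in\mathcal{S}$), one obtains
\[
x_n^N T_j u(x)=(-i)^N(2\pi)^{-n/2}\int e^{ix\cdot\xi}\partial_{\xi_n}^N(m_j\hat u)(\xi)\,d\xi.
\]
Since $|x_n|\ge 2R$ on $\supp(1-\chi)$, division by $x_n^N$ and Plancherel yield
\[
\|(1-\chi)T_j u\|_{L^2}\le (2R)^{-N}\|\partial_{\xi_n}^N(m_j\hat u)\|_{L^2}.
\]

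Next I would expand the right-hand side by Leibniz, $\partial_{\xi_n}^N(m_j\hat u)=\sum_k\binom{N}{k}(\partial_{\xi_n}^{N-k}m_j)(\partial_{\xi_n}^k\hat u)$, and bound each factor separately. For the $\hat u$ factor, the identity $\partial_{\xi_n}^k\hat u=\mathcal{F}((-iy_n)^k u)$ together with the support condition on $u$ shows that $y_n^k u$ also lies in $\{|y_n|\le R\}$; applying the multiplication lemma from Section 2 to the function $y_n^k\eta_R$, where $\eta_R$ is a smooth cutoff equal to $1$ on $\supp u$ and supported slightly larger, gives $\|y_n^k u\|_{Y^{1/2}}\le C_k R^k\|u\|_{Y^{1/2}}$. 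For the symbol factor, since $m^2$ is a polynomial of degree $4$ in $\xi_n$ (so $\partial_{\xi_n}^a m^2=0$ for $a\ge 5$), the bounds $|\partial_{\xi_n}^a m^2|/m^2\lesssim M^{-a}$ derived in the previous lemma for $a=1,2$, together with their elementary higher-$a$ counterparts valid for $\tau>8MR$, propagate via Fa\`a di Bruno to pointwise estimates $|\partial_{\xi_n}^{N-k}m_j(\xi)|\le C_{N,k}M^{-(N-k)+\beta_j}m(\xi)^{1/2}$ for constants $\beta_j$ independent of $N$.

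Combining the two via Cauchy--Schwarz on each Leibniz term (pairing $|\partial^{N-k}m_j|/m^{1/2}$ in $L^\infty$ with $m^{1/2}\partial^k\hat u$ in $L^2$) and summing in $k$ should produce the desired bound $C_N R^{-N}M^{-N}\|u\|_{Y^{1/2}}$. The hypothesis $\tau>8MR$ is used to convert any residual $\tau$-factors into $(MR)^{-1}$-factors, and the basic inequality $\|u\|_{L^2}\le M^{-1/4}\tau^{-1/2}\|u\|_{Y^{1/2}}$ is used where the Leibniz index forces it.

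\textbf{Main obstacle.} The authors themselves warn that the naive implementation of this scheme suffers from growth generated by the $(n-2)$-dimensional critical manifold $\{|\xi'|=\tau,\ \xi_n=0\}$, along which $m$ attains its minimum $M^{1/2}\tau$. The delicate Leibniz term is $k=N$, where no $M$-smallness comes from the symbol factor and one is left only with the $R^N$ loss from $\|y_n^N u\|_{Y^{1/2}}\lesssim R^N\|u\|_{Y^{1/2}}$, which must be balanced exactly against the $(2R)^{-N}$ gain from integration by parts; the remaining task is to show that all residual powers of $\tau$ collapse to the desired $M^{-N}$ factor through the assumption $\tau>8MR$. Expect the bulk of the calculation to consist in making these cancellations explicit and in verifying the pointwise symbol bounds in (a) in each of the regions $|\xi|\lessgtr 2\tau$ case-by-case, in the same spirit as the estimates \eqref{es:firstDERIV}--\eqref{es:secondDERIV} that appear in the previous lemma.
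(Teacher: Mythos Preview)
Your scheme has a genuine gap at precisely the point you flag as the main obstacle: the top Leibniz term $k=N$, where no derivative falls on the symbol. For that term your pairing gives
\[
(2R)^{-N}\,\bigl\|m_j\,m^{-1/2}\bigr\|_{L^\infty}\,\|x_n^N u\|_{Y^{1/2}}\;\lesssim\; 2^{-N}\,\bigl\|m_j/m\bigr\|_{L^\infty}\,\|u\|_{Y^{1/2}},
\]
and one computes $\|m_j/m\|_{L^\infty}=M^{-1/2}$ for $j=3$ (attained where $m=M^{1/2}\tau$) and $\|m_j/m\|_{L^\infty}=M^{1/2}$ for $j=1,2$ (since $m^2\ge M^{-1}||\xi|^2-\tau^2|^2$ and $m^2\ge M^{-1}\tau^2\xi_n^2$, with near-equality off the characteristic set). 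Thus the $k=N$ contribution is $2^{-N}M^{\pm1/2}\|u\|_{Y^{1/2}}$, carrying neither $R^{-N}$ nor $M^{-N}$, and there are no leftover negative powers of $\tau$ to trade in via $\tau>8MR$. Taking more integrations by parts does not help, because the top term always carries zero symbol derivatives; the same defect shows up, more mildly, at every $k\ge1$, where your bound is only $R^{-(N-k)}M^{-(N-k)+\beta_j}$.

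The paper avoids this by never letting a derivative touch $\hat u$. After Plancherel in $x'$ the operator becomes a convolution in $x_n$, and the spatial separation is encoded by a factor $y_n^{-d}(1-\chi')\in L^1(\R)$ with norm $\lesssim R^{-d+1}$; all $d$ derivatives then fall on the symbol $P(\xi',\cdot)$ alone. What remains is a \emph{uniform-in-$\xi'$} bound on $\|\partial_{\xi_n}^d P(\xi',\cdot)\|_{L^1(\R)}$, supplied by Lemmas~\ref{prep1}--\ref{prep2} and Corollary~\ref{prepcor}, which give $\sup_{\xi'}\|\partial_{\xi_n}^d m^{-1/2}\|_{L^1}\lesssim M^{-d/4+3/4}\tau^{-1/2}$ and analogues for $m_1,m_2$. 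Because no derivatives are spent on $\hat u$, one is free to take $d$ of order $4N$, and the weak exponent $d/4$ then suffices to produce the factor $R^{-N}M^{-N}$.
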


Before embarking on the proof of this, we prove a pair of elementary preliminary lemmas.

\begin{lemma}\label{prep1}\sl Let $k\ge 4$. Then $|\partial_{\xi_n}^k  m(\xi)^{-1/2}|$ is bounded above by a constant multiple,  depending only on $k$, of
\begin{align*}
 m(\xi)^{-1/2} \sum_{\ell\ge k/4}^k\Big( \mathbf{1}_{|\xi| \geq 2 \tau} \frac{|\xi|^3}{|\xi|^4+M^2 \tau^2} 
 + \mathbf{1}_{|\xi| < 2\tau} \frac{\big||\xi| - \tau\big| + |\xi_n| + 1}{\big||\xi| - \tau\big|^2 + |\xi_n|^2 + M^2} \Big)^{\ell}.
\end{align*}
\end{lemma}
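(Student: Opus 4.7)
The plan is to apply the Fa\`a di Bruno formula to $m^{-1/2} = (m^2)^{-1/4}$, which gives
\[
\partial_{\xi_n}^k m^{-1/2} = \sum_{\ell=1}^{k} c_\ell\, m^{-1/2-2\ell}\, B_{k,\ell}\bigl(\partial_{\xi_n} m^2, \partial_{\xi_n}^2 m^2, \dots\bigr),
\]
where $B_{k,\ell}$ denotes the incomplete Bell polynomial. This reduces everything to understanding $\partial_{\xi_n}^j m^2$. The crucial combinatorial observation is that $m^2$ is a polynomial of degree four in $\xi_n$, so $\partial_{\xi_n}^j m^2 \equiv 0$ for $j \geq 5$. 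Each monomial of $B_{k,\ell}$ has the form $\prod_i (\partial_{\xi_n}^i m^2)^{j_i}$ with $\sum_i j_i = \ell$ and $\sum_i i\,j_i = k$; the surviving monomials must satisfy $j_i = 0$ for $i \geq 5$, so $k = \sum_{i=1}^4 i\,j_i \leq 4\ell$. This forces $\ell \geq k/4$, accounting exactly for the lower summation index in the statement.

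The remaining ingredient is the pointwise estimate
\[
|\partial_{\xi_n}^j m^2| \lesssim m^2 \cdot E(\xi), \qquad j=1,2,3,4,
\]
where $E(\xi)$ denotes the bracketed expression in the statement. Granted this, every surviving monomial of the Bell polynomial is bounded by $(m^2 E)^\ell = m^{2\ell} E^\ell$; combined with the prefactor $m^{-1/2-2\ell}$ it contributes $m^{-1/2} E^\ell$, and summing over $\lceil k/4\rceil \leq \ell \leq k$ yields the claim (the constants accumulated depend only on $k$ since $B_{k,\ell}$ has $O_k(1)$ monomials).

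To prove the pointwise estimate I would argue by explicit case analysis, in the same spirit as the calculation carried out for $j=1$ just before \eqref{es:firstDERIV} in the excerpt. In the region $|\xi| \geq 2\tau$, using $\tau \leq |\xi|/2$ one gets $m^2 \sim M^{-1}|\xi|^4 + M\tau^2$ and hence $m^2 \cdot E \sim M^{-1}|\xi|^3$; inspection of the closed forms of $\partial_{\xi_n}^j m^2$ for $j=1,2,3,4$ shows each is dominated by $M^{-1}|\xi|^3$. In the region $|\xi|<2\tau$, the factorisation $|\xi|^2-\tau^2 = (|\xi|-\tau)(|\xi|+\tau)$ together with $|\xi|+\tau \sim \tau$ gives $m^2 \sim M^{-1}\tau^2(||\xi|-\tau|^2+|\xi_n|^2+M^2)$ and $m^2 \cdot E \sim M^{-1}\tau^2(||\xi|-\tau|+|\xi_n|+1)$; the required bounds follow by using $\tau \geq 1$ to absorb the additive $+1$ in the numerator, and $|\xi_n|, ||\xi|-\tau| \lesssim \tau$ to compare mixed terms such as $\tau||\xi|-\tau||\xi_n|$ against $\tau^2(||\xi|-\tau|+|\xi_n|)$.

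The only genuine difficulty is the indexing bookkeeping in Fa\`a di Bruno, in particular tracking how the degree-four structure of $m^2$ produces the cutoff $\ell \geq k/4$; once the pointwise estimate on $\partial_{\xi_n}^j m^2$ is in place, the assembly is routine, and indeed essentially identical to the calculations performed for $j=1,2$ in the estimates \eqref{es:firstDERIV} and \eqref{es:secondDERIV} of the excerpt.
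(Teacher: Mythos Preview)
Your proposal is correct and follows essentially the same route as the paper: the paper writes the Fa\`a di Bruno expansion explicitly as $m^{-1/2}\sum_{\ell}\sum_{\beta_1+\dots+\beta_\ell=k}\prod_j |\partial_{\xi_n}^{\beta_j} m^2|/m^2$ (your Bell polynomials), computes $\partial_{\xi_n}^j m^2$ for $j\le 4$, observes that $\partial_{\xi_n}^5 m^2=0$ forces $\ell\ge k/4$, and then bounds each ratio $|\partial_{\xi_n}^j m^2|/m^2$ by the common expression $E(\xi)$ via the same two-region case split you describe. The only cosmetic difference is that the paper sums the four explicit derivative bounds first and then simplifies the resulting quotient, whereas you compute $m^2 E$ in each region and compare; both arrive at the same pointwise estimate.
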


\begin{proof}
By calculating, it is easy to show that
\begin{equation}
|\partial_{\xi_n}^k  m(\xi)^{-1/2}| \lesssim  m(\xi)^{-1/2}\sum_{\ell=1}^k \sum_{\beta_1 + \dots + \beta_\ell = k} \frac{|\partial_{\xi_n}^{\beta_1}  m(\xi)^2|}{ m(\xi)^2} \cdots \frac{|\partial_{\xi_n}^{\beta_\ell}  m(\xi)^2|}{ m(\xi)^2},
\label{es:derivatives}
\end{equation}
where $\beta_j=1,\ldots,k$.  Again calculations yield that
\begin{align*}
|\partial_{\xi_n}  m(\xi)^2| &\lesssim M^{-1} \big||\xi|^2 - \tau^2\big| |\xi_n| + M^{-1} \tau^2 |\xi_n|, \\
|\partial_{\xi_n}^{2}  m(\xi)^2| &\lesssim M^{-1} |\xi_n|^2 + M^{-1} \big||\xi|^2 - \tau^2\big| + M^{-1} \tau^2, \\
|\partial_{\xi_n}^{3}  m(\xi)^2| &\lesssim M^{-1} |\xi_n|, \\
|\partial_{\xi_n}^{4}  m(\xi)^2| &\lesssim M^{-1},
\end{align*}
and $\partial_{\xi_n}^{5}  m^2=0$. Thus, the nonzero terms in our sum satisfy $$k=\beta_1 + \dots + \beta_\ell\le 4\ell$$ and so in fact we need only sum over the range
 $k/4\le\ell\le k$.

On the other hand, by adding together our four bounds we trivially have that
\begin{equation*}
\frac{|\partial_{\xi_n}^{j}  m(\xi)^2|}{ m(\xi)^2} \lesssim \frac{\big||\xi|^2 - \tau^2\big| |\xi_n| + \tau^2 |\xi_n| + |\xi_n|^2 + \big||\xi|^2 - \tau^2\big| + \tau^2 + |\xi_n|}{\big||\xi|^2 - \tau^2\big|^2 + \tau^2 |\xi_n|^2 + M^2 \tau^2},
\end{equation*}
for $j = 1, \dots, 4$, and so 
\begin{equation*}
\frac{|\partial_{\xi_n}^{j}  m(\xi)^2|}{ m(\xi)^2} \lesssim \mathbf{1}_{|\xi| \geq 2 \tau} \frac{|\xi|^3}{|\xi|^4 + M^2 \tau^2} + \mathbf{1}_{|\xi| < 2\tau} \frac{\big||\xi| - \tau\big| + |\xi_n| + 1}{\big||\xi| - \tau\big|^2 + |\xi_n|^2 + M^2}.
\end{equation*}
Plugging this  into \eqref{es:derivatives}, we get the result.
\end{proof}

We will also need to integrate the factors which appear in the previous lemma. 

\begin{lemma}\label{prep2}\sl Let $\ell\ge 2$ and $\tau>M>1$. Then there is an absolute constant $C$ such that
$$
\sup_{\xi'\in\R^{n-1}}\int_\R \bigg( \frac{|\xi|^3}{|\xi|^4 + M^2 \tau^2} + \frac{\big||\xi| - \tau\big| + |\xi_n| + 1}{\big||\xi| - \tau\big|^2 + |\xi_n|^2 + M^2} \bigg)^{\ell}d\xi_n\le C M^{-\ell+1}.
$$
\end{lemma}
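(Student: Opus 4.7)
My plan is to bound the two terms in the integrand separately via $(a+b)^\ell \lesssim a^\ell + b^\ell$, and show each integrates to at most $CM^{-\ell+1}$ uniformly in $\xi'$. The hypothesis $\tau \geq M$ plays a crucial role in converting factors of $(M\tau)^{-1/2}$ into the sharper $M^{-1}$.

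For the first term, $g(|\xi|) := |\xi|^3/(|\xi|^4 + M^2\tau^2)$, I would use two elementary estimates: the trivial bound $g(|\xi|) \leq 1/|\xi|$, and the global bound $g(|\xi|) \leq C(M\tau)^{-1/2}$ that comes from AM-GM on $|\xi|^4$ and $M^2\tau^2$ (or equivalently from the fact that $g$ attains its maximum at $|\xi| \sim (M\tau)^{1/2}$). I would then split the $\xi_n$ integration at $|\xi_n| = (M\tau)^{1/2}$: on $|\xi_n| \geq (M\tau)^{1/2}$ I use $|\xi| \geq |\xi_n|$ and the $1/|\xi|$ bound to get $\int |\xi_n|^{-\ell} d\xi_n \sim (M\tau)^{-(\ell-1)/2}$; on the complementary interval I use the global bound, whose $\ell$-th power times the interval length $2(M\tau)^{1/2}$ gives the same quantity $(M\tau)^{-(\ell-1)/2}$. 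Since $\tau \geq M$, this is $\leq M^{-(\ell-1)} = M^{-\ell+1}$.

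For the second term I would split the numerator as $|r-\tau| + |\xi_n| + 1$ (with $r = |\xi|$) and use $(a+b+c)^\ell \lesssim a^\ell + b^\ell + c^\ell$, then in each of the three pieces drop whichever terms of the denominator are not needed. Specifically, the key trick is to bound the $r$-dependence away: for the first piece, AM-GM gives
\[
\frac{|r-\tau|}{(r-\tau)^2 + \xi_n^2 + M^2} \leq \frac{1}{2\sqrt{\xi_n^2 + M^2}},
\]
while for the second and third pieces I simply drop $(r-\tau)^2$ to get quantities depending only on $\xi_n$ and $M$. After this reduction, each of the three integrals is computed (or just estimated) by the substitution $\xi_n = Mt$, and converges for $\ell \geq 2$ to a constant times $M^{-\ell+1}$, $M^{-\ell+1}$, and $M^{-2\ell+1}$ respectively.

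The main obstacle is conceptual rather than computational: at first glance the first term looks dangerous because $|\xi|$ can be arbitrarily large as $|\xi_n| \to \infty$, and the second term couples $r = \sqrt{|\xi'|^2 + \xi_n^2}$ nontrivially to $\xi_n$ through the square-root. Both difficulties dissolve once one realises that (a) the concavity of $r^3/(r^4 + M^2\tau^2)$ concentrates the mass near $|\xi_n| \sim (M\tau)^{1/2}$, and (b) the three pieces in the second term can each be decoupled from $r$ by an AM-GM step or a simple drop, reducing everything to one-dimensional Cauchy-type integrals in $\xi_n$. Summing the two contributions yields the claimed bound $CM^{-\ell+1}$, uniformly in $\xi' \in \R^{n-1}$.
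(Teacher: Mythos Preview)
Your argument is correct and follows the same overall strategy as the paper: decouple the integrand from $\xi'$ to obtain a function of $\xi_n$ alone, then rescale by $M$. The tactical details differ only in the treatment of the first term $g(|\xi|)=|\xi|^3/(|\xi|^4+M^2\tau^2)$. The paper splits $|\xi|^3 \lesssim |\xi'|^3 + |\xi_n|^3$ and uses the elementary inequality $a^3/(a^4+b)\le b^{-1/4}$ to bound the $|\xi'|^3$-piece by $(|\xi_n|^4+M^2\tau^2)^{-1/4}$; after invoking $\tau>M$ to replace $M^2\tau^2$ by $M^4$, a single change of variables $\xi_n\mapsto My$ then handles all four pieces at once. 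You instead combine the global maximum $g\lesssim (M\tau)^{-1/2}$ with the pointwise bound $g(|\xi|)\le 1/|\xi|\le 1/|\xi_n|$ and split the integration domain at $|\xi_n|=(M\tau)^{1/2}$, obtaining the intermediate bound $(M\tau)^{-(\ell-1)/2}$, which is in fact slightly sharper than the $M^{-\ell+1}$ required. For the second term the two proofs coincide: both use AM--GM on the $\big||\xi|-\tau\big|$ piece and drop $(|\xi|-\tau)^2$ from the denominator for the remaining pieces before rescaling. Either route gives an $\ell$-dependent constant (from $(a+b)^\ell\lesssim a^\ell+b^\ell$ in your case, from $\int H(y)^\ell\,dy$ in the paper's), which is harmless for the application since the lemma is only invoked for $\ell$ in a finite range determined by a fixed $k$.
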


\begin{proof}
Using the elementary inequalities,
\begin{align*}
\frac{|\xi|^3}{|\xi|^4 + M^2 \tau^2} & \lesssim \frac{|\xi'|^3}{|\xi'|^4 + |\xi_n|^4 + M^2 \tau^2} + \frac{|\xi_n|^3}{|\xi_n|^4 + M^2 \tau^2} \\
& \lesssim \frac{1}{(|\xi_n|^4 + M^2 \tau^2)^{1/4}} + \frac{|\xi_n|^3}{|\xi_n|^4 + M^2 \tau^2}
\end{align*}
and
\[\frac{\big||\xi| - \tau\big|}{\big||\xi| - \tau\big|^2 + |\xi_n|^2 + M^2} \lesssim \frac{1}{(|\xi_n|^2 + M^2)^{1/2}}, \]
we see that the left-hand side of our desired inequality is bounded by a constant multiple of 
\begin{align*}\nonumber
\int_\R \bigg(\frac{1}{|\xi_n|  + M}& + \frac{|\xi_n|^3}{|\xi_n|^4 + M^4}+\frac{1}{(|\xi_n|^2 + M^2)^{1/2}}+ \frac{ |\xi_n| + 1}{|\xi_n|^2 + M^2}\bigg)^{\ell}d\xi_n.
\end{align*}
Then, by the change of variables $\xi_n\to My$, this evidently satisfies the desired bound.
\end{proof}

Combining the two lemmas, and using the fact that $ m(\xi)\ge M^{1/2} \tau$, we obtain the following corollary.

\begin{corollary}\label{prepcor}\sl Let $k\ge 8$ and $\tau>M>1$. Then there is a constant~$C$, depending only on $k$, such that
$$\sup_{\xi'\in\R^{n-1}} \|\partial_{\xi_n}^k  m(\xi',\centerdot)^{-1/2} \|_{L^1(\R)}\le CM^{-k/4+3/4} \tau^{-1/2}.$$
\end{corollary}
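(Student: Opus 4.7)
The plan is simply to assemble the two preparatory lemmas that precede the corollary, with one additional observation used to extract a power of $M$.

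First I would apply Lemma \ref{prep1} with $\xi'$ fixed to obtain, pointwise,
\[
|\partial_{\xi_n}^k m(\xi',\xi_n)^{-1/2}| \lesssim m(\xi)^{-1/2} \sum_{\ell = \lceil k/4 \rceil}^{k} F(\xi)^{\ell},
\]
where $F(\xi)$ denotes the bracketed quantity appearing in Lemma \ref{prep1}. The crucial observation is that, by the very definition of $m$, we have $m(\xi)^2 \ge M\tau^2$, hence the uniform pointwise bound $m(\xi)^{-1/2} \le M^{-1/4}\tau^{-1/2}$. Pulling this factor outside the integral in $\xi_n$ gives
\[
\| \partial_{\xi_n}^k m(\xi',\centerdot)^{-1/2}\|_{L^1(\R)} \lesssim M^{-1/4}\tau^{-1/2} \sum_{\ell = \lceil k/4 \rceil}^{k} \int_{\R} F(\xi)^{\ell}\, d\xi_n .
\]

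Next, since $k \ge 8$ forces $\ell \ge k/4 \ge 2$, and since $\tau > M > 1$, Lemma \ref{prep2} applies to each term and yields
\[
\int_\R F(\xi)^{\ell}\, d\xi_n \le C_\ell\, M^{-\ell+1},
\]
uniformly in $\xi'$. Summing over $\ell$, the hypothesis $M > 1$ means the series is dominated (up to a constant depending only on $k$) by its smallest-$\ell$ term, namely $\ell = \lceil k/4 \rceil \ge k/4$, which contributes at most $C_k M^{-k/4 + 1}$.

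Multiplying by the prefactor $M^{-1/4}\tau^{-1/2}$ produces the desired estimate $C_k\, M^{-k/4 + 3/4}\tau^{-1/2}$, uniformly in $\xi'$. There is no genuine obstacle here: all the work has already been done in Lemmas \ref{prep1} and \ref{prep2}. The only subtle point, and the reason the exponent in the conclusion is $3/4$ rather than $1$, is the use of the improved pointwise bound $m(\xi)^{-1/2} \le M^{-1/4}\tau^{-1/2}$ in place of the weaker $m(\xi)^{-1/2} \le \tau^{-1/2}$; this is what generates the extra $M^{-1/4}$ factor needed to match the statement.
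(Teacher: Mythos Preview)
Your proposal is correct and follows exactly the route the paper intends: combine Lemma~\ref{prep1} with Lemma~\ref{prep2} and use the pointwise lower bound on $m$. In fact you are slightly more precise than the paper's one-line justification, which only cites $m(\xi)\ge\tau$; as you observe, one actually needs the sharper bound $m(\xi)\ge M^{1/2}\tau$ (immediate from the term $M\tau^2$ in the definition of $m^2$) to produce the exponent $-k/4+3/4$ rather than $-k/4+1$.
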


With these calculations in hand, we are now ready to prove the lemma.

\begin{proof}[Proof of Lemma~\ref{lem:pseudo-locality}]
Consider  $P$ one of the three Fourier multipliers in the lemma and note that
\[\mathcal{F}' (P(D) u) (\xi', x_n) = \mathcal{F}_n^{-1}( P \widehat{u})(\xi', x_n),\]
where $\mathcal{F}'$ denotes the Fourier transform in $x'$, with dual variable $\xi'$, and $\mathcal{F}_n^{- 1}$ denotes the inverse Fourier transform in $\xi_n$, with spacial variable~$x_n$. By Plancherel's identity applied to $\R^{n - 1}$ and the previous identity, we have
\begin{align*}
\| (1 - \chi) &P(D) u \|^2_{L^2} = \int_\R |(1 - \chi)(x_n)|^2 \int_{\R^{n - 1}} \big|\mathcal{F}_n^{-1}( P \widehat{u})(\xi', x_n)\big|^2 \, d\xi' \, dx_n.
\end{align*}
As the inverse Fourier transform of a product can be written as a convolution, we note that
\[\mathcal{F}_n^{-1}( P\widehat{u})(\xi', x_n) = \frac{1}{(2\pi)^{1/2}} \big\langle \mathcal{F}_n^{-1} P(\xi', \centerdot),\mathcal{F}'u(\xi', x_n - \centerdot) \big\rangle.\]

Since $1 - \chi (x_n)$ vanishes when $|x_n| \leq 2R$, we can focus on the points where $|x_n| > 2R$ and introduce $1 - \chi'(y_n)$ with $\chi' (y_n) = \chi_0 (4 y_n / R)$;
\begin{align*}(1 - \chi(x_n)) \mathcal{F}_n^{-1}&( P \widehat{u})(\xi', x_n)= \frac{(1 - \chi(x_n))}{(2\pi)^{1/2}} \big\langle (1 - \chi') \mathcal{F}_n^{-1} P(\xi',\centerdot), \mathcal{F}'u(\xi',x_n - \centerdot) \big\rangle.\end{align*}
This is because if $|y_n| \leq R$, then $|x_n - y_n| > R$ and $\mathcal{F}'u(\xi',x_n - y_n) = 0$, and if $|y_n| > R$ then $1-\chi'(y_n)=1$.  On the other hand,
\[\mathcal{F}_n^{-1} P(\xi',\centerdot)(y_n) =   i^{d}y_n^{-d}  \mathcal{F}_n^{-1}  \partial_{\xi_n}^d P(\xi',\centerdot)(y_n).\]
Taking $d>1$, we have that
\[g_d = i^{d}y_n^{-d} (1 - \chi') \in L^1(\R), \qquad \| g_d \|_{L^1} \lesssim R^{-d+1}. \]
Then, by Young's inequality  and Plancherel's identity,
\begin{align*}
\|(1 - \chi) P(D)u \|_{L^2}& \lesssim  \Big(\int_{\R^{n-1}}\| g_d\mathcal{F}_n^{-1} \partial_{\xi_n}^d P(\xi',\centerdot) \|^2_{L^1(\R)} \| \mathcal{F}'u(\xi',\centerdot) \|^2_{L^2(\R)} d\xi'\Big)^{1/2} \\
&\!\!\!\!\!\!\!\!\lesssim  \Big(\int_{\R^{n-1}}\| g_d \|^2_{L^1(\R)}  \| \mathcal{F}_n^{-1} \partial_{\xi_n}^d P(\xi',\centerdot) \|^2_{L^\infty(\R)} \| \mathcal{F}'u(\xi',\centerdot) \|^2_{L^2(\R)} d\xi'\Big)^{1/2} \\
&\!\!\!\!\!\!\!\! \lesssim R^{-d+1} \sup_{\xi'\in\R^{n-1}}\| \partial_{\xi_n}^d P(\xi',\centerdot) \|_{L^1(\R)} \| u \|_{L^2}\\
&\!\!\!\!\!\!\!\! \lesssim R^{-d+1}M^{-1/4}\tau^{-1/2} \sup_{\xi'\in\R^{n-1}}\| \partial_{\xi_n}^d P(\xi',\centerdot) \|_{L^1(\R)} \| u \|_{Y^{1/2}}.
\end{align*}
Here the implicit constant depends only on $d$. Thus, it will suffice to prove that, for $d \geq 4N + 3$,
\begin{equation}\label{this}  \sup_{\xi'\in\R^{n-1}} \|\partial_{\xi_n}^d P(\xi',\cdot) \|_{L^1(\R)}\le CM^{-N} \tau^{1/2},  
\end{equation}
with multipliers $P$ given by $(\tau^2 - |\centerdot|^2)  m^{-1/2}$, $ i\tau\xi_n  m^{-1/2}$ or $ \tau m^{-1/2}$. The third multiplier was already dealt with by Corollary~\ref{prepcor} and we will only have to work slightly harder for the other two. 

By the Leibniz rule, the second multiplier can be bounded as
\begin{align}\nonumber
\int_\R |  \partial_{\xi_n}^d  ( i \tau\xi_n &m(\xi)^{-1/2}) | \,d\xi_n\\
&\lesssim \int_\R |\tau\xi_n \partial_{\xi_n}^d m(\xi)^{-1/2}| \,d\xi_n + \int_\R |\tau \partial_{\xi_n}^{d - 1} m(\xi)^{-1/2}| \,d\xi_n.\label{es:derivativeMULT2}
\end{align}
Again a sufficient bound for the second term is already given by Corollary~\ref{prepcor}. For the first term, by Lemma~\ref{prep1}, we have
\begin{align*}
\int_\R |\tau\xi_n \partial_{\xi_n}^{d}& m(\xi)^{-1/2}| \, d\xi_n\lesssim \sum_{l\ge d/4}^{d} \int_\R \frac{\tau|\xi_n|}{m(\xi)^{1/2}}\\
\times & \bigg( \mathbf{1}_{|\xi| \geq 2 \tau} \frac{|\xi|^3}{|\xi|^4 + M^2 \tau^2} + \mathbf{1}_{|\xi| < 2\tau} \frac{\big||\xi| - \tau\big| + |\xi_n| + 1}{\big||\xi| - \tau\big|^2 + |\xi_n|^2 + M^2} \bigg)^{l} \,d\xi_n.
\end{align*}
Thus, by Lemma~\ref{prep2} (with $\ell=l-1$),  in order to obtain the bound required in \eqref{this} it would suffice to prove that
\begin{align*}
\frac{\tau|\xi_n|}{ m(\xi)^{1/2}} \bigg( \mathbf{1}_{|\xi| \geq 2 \tau}  \frac{|\xi|^3}{|\xi|^4 + M^2 \tau^2} + \mathbf{1}_{|\xi| < 2\tau} \frac{\big||\xi| - \tau\big| + |\xi_n| + 1}{\big||\xi| - \tau\big|^2 + |\xi_n|^2 + M^2} \bigg)
\lesssim  \tau^{1/2}.
\end{align*}
Using the fact that $ m(\xi)\ge \tau$ and $2||\xi| - \tau||\xi_n|\le ||\xi| - \tau|^2+ |\xi_n|^2$ this is clear by inspection, and so we are also done with the first term in \eqref{es:derivativeMULT2} and it remains to prove \eqref{this} for the first multiplier.

By the Leibniz rule, the first multiplier can be bounded as 
\begin{align}
\int_\R |  \partial_{\xi_n}^d &( (\tau^2 - |\xi|^2) m(\xi)^{-1/2} ) | \, d\xi_n 
\lesssim  \int_\R \big||\xi|^2 - \tau^2\big|| \partial_{\xi_n}^dm(\xi)^{-1/2}| \, d\xi_n\nonumber\\
 &\label{es:derivativeMULT1}\qquad\qquad+ \int_\R |\xi_n \partial^{d-1}_{\xi_n} m(\xi)^{-1/2}|  \, d\xi_n 
 + \int_\R |\partial^{d-2}_{\xi_n}m(\xi)^{-1/2}| \, d\xi_n.
\end{align}
Again the bound for the third term is already given by Corollary~\ref{prepcor}. The second term is similar to the previous multiplier and is bounded in exactly the same way. For the first term, by Lemma~\ref{prep1}, we have
\begin{align*}
\int_\R \big||\xi|^2 -& \tau^2\big| | \partial_{\xi_n}^{d}  m(\xi)^{-1/2}| \, d\xi_n\lesssim \sum_{l\ge d/4}^{d} \int_\R \frac{\big||\xi|^2 - \tau^2\big|}{ m(\xi)^{1/2}}\\
\times & \bigg( \mathbf{1}_{|\xi| \geq 2 \tau} \frac{|\xi|^3}{|\xi|^4 + M^2 \tau^2} + \mathbf{1}_{|\xi| < 2\tau} \frac{\big||\xi| - \tau\big| + |\xi_n| + 1}{\big||\xi| - \tau\big|^2 + |\xi_n|^2 + M^2} \bigg)^{l} \,d\xi_n.
\end{align*}
Thus, by Lemma~\ref{prep2} (with $\ell=l-1$), and using the fact that  $$\frac{\big||\xi|^2 - \tau^2\big|^{1/2}}{m(\xi)^{1/2}}\le M^{1/4},$$ in order to obtain the bound required in \eqref{this} it will suffice to prove 
\begin{align*}
\big||\xi|^2 - \tau^2\big|^{1/2}\bigg( \mathbf{1}_{|\xi| \geq 2 \tau}& \frac{|\xi|^3}{|\xi|^4 + M^2 \tau^2} 
+ \mathbf{1}_{|\xi| < 2\tau} \frac{\big||\xi| - \tau\big| + |\xi_n| + 1}{\big||\xi| - \tau\big|^2 + |\xi_n|^2 + M^2} \bigg) \lesssim  \tau^{1/2}.
\end{align*}
Using the fact that  $2||\xi| - \tau|^{1/2}|\xi_n|\le ||\xi| - \tau|+ |\xi_n|^2$ this is clear by inspection as before. Thus we are done with the first term on the right-hand side of \eqref{es:derivativeMULT1} which completes the proof of  \eqref{this} for the first multiplier. With this, we have completed the proof of the lemma. 
\end{proof}

Returning  to \eqref{es:plY1_2} and applying Lemma \ref{lem:pseudo-locality} with $N = 1$, we obtain 
\[\| (1 - \chi)  m(D)^{-1/2} u \|_{Y^1} \lesssim R^{-1} M^{-1/2} \| u \|_{Y^{1/2}} \]
for $\tau > 8MR$. With this bound \eqref{es:afterCOMMUTATOR} becomes,
\begin{equation*}
\| u \|_{Y^{1/2}} \lesssim R \| e^\varphi (- \Delta)(e^{-\varphi} u) \|_{Y^{-1/2}} + R M^{-1/2} \| u \|_{Y^{1/2}}
\end{equation*}
for $\tau > 8MR$. The second term on the right-hand side is negligible. Indeed, letting $C$ denote the implicit constant, we see that for $M > 4 C^2 R^2$ we can absorb the term by half of the left-hand side, and the proof is complete.


\begin{thebibliography}{99}


\bibitem{Al90}
G.~Alessandrini, Singular solutions of elliptic equations and the determination of conductivity by boundary measurements, \textit{J. Differential Equations} \textbf{84} (1990), 252--72.

\bibitem{ALP} K. Astala, M. Lassas, L. P\"aiv\"arinta, The borderlines of invisibility
and visibility for Calderon's inverse problem, \textit{Anal. PDE},
to appear.

\bibitem{AP} K.~Astala, L.~P\"aiv\"arinta, Calder\'on's inverse conductivity problem in the plane, \textit{Annals of Math.}, {\bf 163} (2006), 265--299.

\bibitem{B}
R.~M.~Brown, Global uniqueness in the impedance-imaging problem for less regular conductivities,
\textit{SIAM J. Math. Anal.} \textbf{27} (1996), 1049--1056.

\bibitem{B01}
R.~M.~Brown, Recovering the conductivity at the boundary from the Dirichlet to Neumann map: a pointwise result, \textit{J. Inv. Ill-Posed Problems}, \textbf{9} (2001), 567--574.

\bibitem{BT}
R.~M.~Brown, R. H. Torres, Uniqueness in the inverse conductivity problem for conductivities with $ 3/2 $ derivatives in $L^p$, $p>2n$. \textit{J. Fourier Anal. Appl.} \textbf{9} (2003), 563--574.

\bibitem{C}
A.~P.~Calder\'on, On an inverse boundary value problem, \textit{Comp. Appl. Math.} {\bf 25} (1980), 133--138.

\bibitem{CaGR}
P.~Caro, A.~Garc\'ia, J.~M.~Reyes, Stability of the Calder\'on problem for less regular conductivities, \textit{J. Differential Equations} \textbf{254} (2013), 469--492.


\bibitem{DKSU} D. Dos Santos Ferreira, C. E. Kenig, J. Sj\"ostrand, G. Uhlmann, Determining a magnetic Schr\"odinger operator from partial Cauchy data, \textit{Comm. Math. Phys.} \textbf{271} (2007), 467--488.

\bibitem{GZ} A. Garc\'ia, G. Zhang, Reconstruction from boundary measurements for less regular conductivities, arXiv:1212.0727 (2012).



\bibitem{GLU} A. Greenleaf, M. Lassas, G. Uhlmann, The Calder\'on problem for conormal potentials. I. Global uniqueness and reconstruction, \textit{Comm. Pure Appl. Math.} {\bf 56} (2003), no.~3, 328--352. 

\bibitem{GKLU} A. Greenleaf, Y. Kurylev, M. Lassas, G. Uhlmann, Invisibility and inverse problems, {\it Bull. Amer. Math. Soc.} {\bf 46} (2009), no. 1, 55--97.


\bibitem{H}
B.~Haberman, Uniqueness in Calder\'on's problem for conductivities with unbounded gradient, \textit{Comm. Math. Phys.} 340 (2015), no. 2, 639--659.


\bibitem{HT}
B.~Haberman, D.~Tataru, Uniqueness in Calder\'on's problem with Lipschitz conductivities. \textit{Duke Math. J.}, \textbf{162} (2013), 497--516.


\bibitem{JK} D. Jerison, C. E. Kenig, The inhomogeneous Dirichlet problem in Lipschitz domains, \textit{J. Funct. Anal.} \textbf{130} (1995), 161--219.


\bibitem{KSU} C. E. Kenig, J. Sj\"ostrand, G. Uhlmann, The Calder\'on problem with partial data, \textit{Ann. of Math. (2)} {\bf 165} (2007), no.~2, 567--591.

\bibitem{KU} K. Krupchyk, G. Uhlmann, Uniqueness in an inverse boundary problem for a magnetic Schr\"odinger operator with a bounded magnetic potential, Comm. Math. Phys. {\bf 327} (2014),  993--1009.



\bibitem{KV} R. Kohn, M. Vogelius, Determining conductivity by boundary measurements, \textit{Comm. Pure Appl. Math.} \textbf{37} (1984), 289--298.






\bibitem{NSU} A.~Nachman, J.~Sylvester, G.~Uhlmann, An $n$-dimensional Borg--Levinson theorem, \textit{Comm. Math. Phys.} {\bf 115} (1988), no.~4, 595--605.


\bibitem{PPU} L. P\"aiv\"arinta, A. Panchenko, G. Uhlmann, Complex geometrical optics solutions for Lipschitz conductivities, \textit{Rev. Mat. Iberoamericana} {\bf 19} (2003), no.~1, 57--72.

\bibitem{P63} A. Pli\'s, On non-uniqueness in Cauchy problem for an elliptic second order differential equation, \textit{Bull. Acad. Polon. Sci. S\'er. Sci. Math. Astronom. Phys.} {\bf 11} (1963), 95--100.

\bibitem{St}
E.~M.~Stein, \textit{Singular integrals and differentiability properties of functions},
Princeton University Press, New Jersey (1970).

\bibitem{SU0}
J.~Sylvester, G.~Uhlmann, A uniqueness theorem for an inverse boundary value problem in electrical prospection, 
\textit{Comm. Pure Appl. Math.} \textbf{39} (1986), 91--112.

\bibitem{SU}
J.~Sylvester, G.~Uhlmann, A global uniqueness theorem for an inverse boundary value problem,
\textit{Ann. of Math.} \textbf{125} (1987), 153--169.
\bibitem{SU2} J.~Sylvester, G.~Uhlmann, Inverse boundary value problems at the boundary-continuous dependence,  \textit{Comm. Pure Appl. Math.} \textbf{41} (1988), 197--219.

\bibitem{T} D. Tataru, The $X^s_\theta$ spaces and unique continuation for solutions to the semilinear wave equation, \textit{Comm. PDE} \textbf{21} (1996), 841--887.

\bibitem{U0} G. Uhlmann, Inverse boundary value problems for partial differential equations, Proceedings of the International Congress of Mathematicians, Doc. Math. Vol. III (Berlin, 1998), 77--86.

\bibitem{W} T. H. Wolff, Recent work on sharp estimates in second-order elliptic unique continuation problems, \textit{J. Geom. Anal.} {\bf 3} (1993), no.~6, 621--650. 



\end{thebibliography}
\end{document}